\newtheorem{theorem}{Theorem}[section]
\newtheorem{definition}[theorem]{Definition}
\newtheorem{corollary}[theorem]{Corollary}
\newtheorem{lemma}[theorem]{Lemma}
\newtheorem{example}[theorem]{Example}
\newtheorem*{theorem*}{Theorem}
\newcommand{\Z}{{\mathbb Z}}
\newcommand{\sm}{\backslash}
 \newcommand{\set}[1]{\left\{#1\right\}}
\numberwithin{equation}{section}
\newcommand{\N}{{\mathbb{N}}}
\newcommand{\B}{BM(F)}
\newcommand{\jjoin}{\stackrel{j}\asymp}
\newcommand{\jsim}{\stackrel{j}\sim}
\newcommand{\ora}[1]{\overrightarrow{#1}}
\newcommand{\ds}[1]{\displaystyle {#1}}
\renewcommand{\sm}{\setminus}
\newcommand{\njoin}[1]{\stackrel{#1}\asymp}
\newcommand{\nfoldjoin}[2]{\underset{#1}{\stackrel{#2}{\asymp}}}
\title{
Algebraic Structures on Graphs Joined by Edges}
\author{
Daniel Pinzon, 
Daniel Pragel,
\&
Joshua  Roberts  }
\affil{Department of Mathematics and Statistics, Georgia Gwinnett College}
\date{}
\begin{document}
\maketitle
\begin{abstract}
Let $G_1\jjoin G_2$, the $j$-join of two graphs, be the union of two disjoint graphs connected by $j$ edges in a one-to-one manner. In previous work by Gyurov and Pinzon \cite{BP}, which generalized the results of Badura \cite {Bad} and Rara \cite{R}, the determinant of the adjacency matrix of two $j$-joined graphs was decomposed to sums of determinants of these graphs with vertex deletions or directed graph handles. In this paper, we find the necessary and sufficient properties of a graph $G$ so that for any graph $H$, the determinant of $G\jjoin H$ and $H \jjoin G$ is equal to the determinant of $H$. Subsequently, we define a homomorphism from a quotient of graphs with the $j$-join operation to the monoid of integer matrices under multiplication. We demonstrate through examples that this homomorphism allows us to more easily calculate determinants of chains of joined graphs. This generalizes the work done on determinants of grids and cylinders done in \cite{DanP}, \cite{Bien}, \cite{Deift} and \cite{BKT}
\end{abstract}

\thispagestyle{footer}

\noindent

\section{Introduction}

Let $\mathbb{G}_{j}$ be the set of labeled finite simple directed graphs with at least $2j$ vertices such that, for $G \in \mathbb{G}_{j}$, the set of vertices is  $V(G)=\{1, 2, \dots, m\}$, where $m$ is the number of vertices of $G$. The set of edges is $E(G) \subseteq \{(v,w) \mid v,w \in V, v\neq w\}$. We will at times refer to the ``last" vertices of $G$ using the convention $-1,-2,-3,\dots$ for $|V(G)|,|V(G)|-1,|V(G)|-2,\dots$. Throughout this paper we will use the term graph in general to be a directed graph.

Let $G, H \in \mathbb{G}_{j}$ where $V(G)=\set{1,\dots,m}$ and $V(H)=\set{1,\dots,n}$.   Following \cite{BP}, we define the \textit{j-join} of  $G$ and $H$ with $j$ edges as the graph formed by joining each of the ``last" distinct $j$ vertices of $G$ with each of the corresponding ``first" $j$ distinct vertices of $H$ in both directions. See \Cref{join1} below.

\begin{figure} [h!] \label{join1}
    \centering
    \includegraphics[width = 1
    \textwidth]{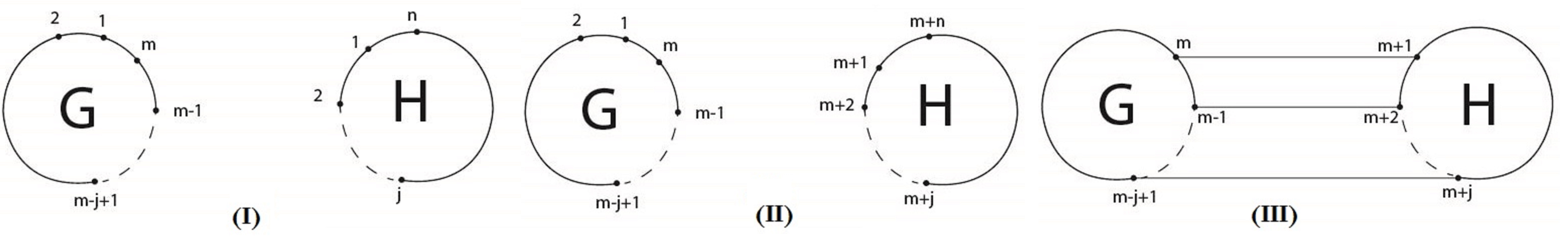}
    \caption{$j$-operation}
    \label{EXDT}
\end{figure}

There is a choice as to whether to put the instructions of the joining operation on the graph elements or on the operation. The labeling contains the information on where to join one graph with another so that we have one well-defined operation rather than many operations. This motivates the use of labeled graphs. It will be clear from the definition below that this operation is associative.

\begin{definition}\label{jopr} 
 Let $G, H \in \mathbb{G}_{j}$. The $j$-join of $G$ and $H$, denoted $ G\jjoin H$, has the vertex and edge sets below where $|V(G)|=m$ and each $i\in V(H)$ is relabeled as $m+i$ in $V(G\jjoin H)$.

$$V(G \stackrel{j}\asymp H) = V(G) \cup \{m+i\mid i\in V(H)\} \text{, and}$$
\begin{align*}
    E(G \stackrel{j}\asymp H) = E(G) &\cup \{(m+1 -i, m+i),(m+i, m+1 -i) \mid 1 \le i \le j\}\\
    &\cup  \set{(m+i,m+j)\mid (i,j)\in E(H)}. 
\end{align*} 
\end{definition} 

A motivational problem that this paper solves in \Cref{idsection} is to determine the necessary and sufficient conditions for the existence of an ``identity'' graph in the sense that left or right joining such a graph to any graph $G$ does not change the determinant of $G$, that is,
$|G \jjoin Id \jjoin H| = |G \jjoin H|$. We note that we have adopted the notation $|G|$ to mean the determinant of the adjacency matrix of $G$. It can the case that a graph can act as a one-sided identity only as seen in the following example. 
\begin{figure}[H]
\begin{center}
\begin{tikzpicture}[scale=0.5,dot/.style 2 args={circle,inner sep=1pt,fill,label={#2:#1},name=#1}]
\tikzstyle{every node}=[circle,fill=black,inner sep=0.5mm,yshift=0,xshift=0]
            \node  [label={[xshift=0cm ]left:1}] (1) at  (-4,0)  {};
            \node  [label={[xshift=0cm ]above:2}] (2) at  (-2,0)  {};
            \node  [label={[xshift=0cm ]right:3}] (3) at  (0,1)  {};
            \node  [label={[xshift=0cm ]right:4}] (4) at  (0,-1)  {};
            \draw (1)--(2);
            \draw (2)--(3);
            \draw (3)--(4);
            \draw (2)--(4);
\end{tikzpicture}
\end{center}
\caption{Example of right identity for 1-join.}
\label{rid}
\end{figure}
\begin{example}
    Consider the graph in \Cref{rid}, which we will denote $R_{id}$, joined by one edge to $K_3$. We see that $|R_{id}|=1$ and $|K_3|=2$. Then, $| K_3 \njoin{1} R_{id}|=2=|K_3|$ but $|R_{id} \njoin{1} K_3|=4$ 
    Results in this paper will show that $R_{id}$ will act as an right identity, but not as a left identity.
\end{example}

From \cite{BP}, we can write the determinant of the adjacency matrix of the $j$-join of two graphs $G$ and $H$ as a sum of determinants of the adjacency matrices of modifications of $G$ and $H$.  The two modifications are vertex deletions and directed graph handles.  We describe them below.
  
\begin{definition}\label{verdel}
{\bf (Vertex deletion)} For a graph $G$ and a vertex $v\in V(G)$ we denote by $G\backslash\{v\}$ the subgraph of $G$
obtained by removing the vertex $v$ from $V(G)$ and all edges that are incident with $v$ from $E(G).$

Further, if $R$ is a subset of vertices of $G$, we denote by $G\sm R$ the subgraph of $G$
obtained by deleting all vertices in $R$ from $G$.
\end{definition}

The operation of attaching a directed graph handle is attaching a copy of a directed path on 3 vertices, $\ora{P}_3$, as described below.

\begin{definition}
{\bf{(Directed Graph Handle)}}\label{handle}
For a graph $G$ and vertices $u,v\in V(G)$, we denote $G_{[u,v]}$ to be the graph where a new vertex
$w=|V(G)|+1$ is added to $V(G)$ and a directed edge from vertex $u$ to vertex $w$ and a directed edge from vertex $w$ to vertex $v$ are added to $E(G)$.
Vertex $w$ is called the \textit{directed graph handle vertex} of the directed graph handle $[u,v]$.

Further, if $B$ is a set of ordered pairs of elements of $V(G)$, we denote $G_B$ to be the graph
where for each $[u,v]\in B$ a new directed graph handle is attached to $G$.
\end{definition} 

We denote $|R|$ as the number of elements of $R$ and $|B|$ as the number of handles in $B$. In this paper, we will regularly attach handles on graphs where a set of vertices $R$ have been removed. If $B$ is the set of handles, then we denote this graph as $(G\sm R)_B$. 

We will be using the main result from \cite{BP} given below which describes how to express the determinant of $G\jjoin H$ as a sum of the determinants of modifications of graphs. We sum over all possible vertex removals $R$ from a set $J=\{i\mid 1\leq i\leq j\}$ which are the ``first" $j$ vertices of $H$. We also modify $G$ in a similar, conjugated way. That is, if $i\in R$ is removed from $H$, then the conjugated vertex $-i\in R^*=\{-i\mid i\in R\}$ is removed from $G$. 

Given $B$, the set of appended directed graph handles on $H$, the conjugate set is defined as
$B^*=\{[-r,-c]\ \mid \ -r,-c \in V(G),[c,r]\in B\}.$
Note that the direction of the conjugated handles are reversed. Below we give an example of a term in the sum.

\begin{example} Let $G, H \in \mathbb{G}_{10}$ and $m=|V(G)|$. Then, $J=\{1,\dots,10\}.$ Let $R=\{4\}$ and let $B=\{[1,3],[2,5]\}$ be handles made from $J\sm R$. Then, the conjugate removal set and handle set for $G$ are $R^*=\{-4\}=\{m-3\}$ and $B^*=\{[-3,-1],[-5,-2]\}=\{[m-2,m],[m-4,m-1]\}$. The figure below shows $(G\sm R^*)_{B^*}$ and $(H\sm R)_{B}$. Note that the open circle at $m-3\in V(G)$ and $4\in V(H)$ correspond to vertex deletions at those vertices. Also, notice that the handles on $G$ are in the opposite direction of those on $H$.
\begin{center}
\begin{tikzpicture}[scale=2,dot/.style 2 args={circle,inner sep=1pt,fill,label={#2:#1},name=#1}]
    \draw   (-4,0) circle (.75cm);
    \draw   (0,0) circle (.75cm);
\tikzstyle{every node}=[circle,fill=white,inner sep=0.5mm]
    \node  (G) at  (-4,0) {{\fontsize{50}{50}\selectfont\mbox{{$G$}}}};
\tikzstyle{every node}=[circle,fill=white,inner sep=0.5mm]
    \node  (H) at  (0,0) {{\fontsize{50}{50}\selectfont\mbox{{$H$}}}};
\tikzstyle{every node}=[circle,fill=black,inner sep=2pt,xshift=-8cm]
\node  at  (35:0.75)   {}   ;
\tikzstyle{every node}=[circle,fill=white,inner sep=1.5pt,xshift=-8cm]
\node  [label={[xshift=8cm ]left:$m-3$}](3) at  (35:0.75)   {}   ;
\tikzstyle{every node}=[circle,fill=black,inner sep=2pt,xshift=-8cm]
\node  [label={[xshift=8cm ]left:$m-4$}](3) at  (60:0.75)   {}   ;
\node  [label={[xshift=8cm ]left:$m-2$}](2) at  (15:0.75) {};
\node (v1) at (15:1.5) {};
\node (v2) at (-15:1.5) {};
\node [label={[xshift=8cm ]left:$m-1$}](1) at (-15:0.75){};
\node  [label={[xshift=8cm ]left:$m$}](0) at  (-45:0.75) {};
\draw[-{Latex[length=3mm]}] (3)--(v1);
\draw[-{Latex[length=3mm]}] (v1)--(1);
\draw[-{Latex[length=3mm]}] (2)--(v2);
\draw[-{Latex[length=3mm]}] (v2)--(0);
\tikzstyle{every node}=[circle,fill=black,inner sep=2pt,xshift=0cm]
\node [label={[xshift=0cm ]right:4}] (h3) at  (150:0.75)  {}   ;
\tikzstyle{every node}=[circle,fill=white,inner sep=1.5pt,xshift=0cm]
\node at  (150:0.75)  {}   ;
\tikzstyle{every node}=[circle,fill=black,inner sep=2pt,xshift=0cm]
\node [label={[xshift=0cm ]right:5}] (h4) at  (120:0.75)  {}   ;
\node  [label={[xshift=0cm ]right:3}](h2) at  (165:0.75) {};
\node (hv1) at (165:1.5) {};
\node (hv2) at (195:1.5) {};
\node [label={[xshift=0cm ]right:2}](h1) at (195:0.75){};
\node  [label={[xshift=0cm ]right:1}](h0) at  (225:0.75) {};
\draw[-{Latex[length=3mm]}] (h1)--(hv1);
\draw[-{Latex[length=3mm]}] (hv1)--(h4);
\draw[-{Latex[length=3mm]}] (h0)--(hv2);
\draw[-{Latex[length=3mm]}] (hv2)--(h2);
\end{tikzpicture}
\end{center}
    
\end{example}
 
The determinant of the disjoint union of the two graphs above represents (up to sign) one of the terms in the summation below. The summation does not sum over all possible handles, but rather a subset of them. An \textit{allowable handle set} is a set of handles $B$ such that a vertex appears only once in the set and, for any two handles $[a,b], [c,d]$, either $[a,b]<[c,d]$ or $[a,b]>[c,d]$, where the inequality is component-wise.

The theorem below is a full generalization of ideas first introduced by Rara \cite{R} $(j=1)$ and continued in \cite{Bad} and \cite{BG} $(j=2)$.

\begin{theorem}\label{sum}\cite{BP}
Let $G$ and $H$ be graphs of order $m$ and $n$ respectively where. Let $J$ be the set of vertices of $H$ that are joined to $G$. Then
$$
|G \jjoin H|  = \sum_{R \subset J}\sum_B (-1)^{|R|+|B|}|(G \sm R^*)_{B^*}|  |(H \sm R)_{B}|
$$
where the summation is over all allowable handle sets $B$.

\end{theorem}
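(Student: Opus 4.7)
The plan is to prove the identity by expanding both sides using the Leibniz formula and establishing a sign-preserving bijection between their terms. Writing $g_i:=m+1-i$ and $h_i:=m+i$ for the paired join vertices and $M$ for the adjacency matrix of $G\jjoin H$, one expands $|G\jjoin H|=\sum_{\tau}\operatorname{sgn}(\tau)\prod_{v}M_{v,\tau(v)}$ over permutations $\tau$ of $V(G\jjoin H)$, and expands each determinant on the right-hand side analogously over permutations of $V((G\sm R^*)_{B^*})$ and $V((H\sm R)_{B})$. Each nonzero term on the left corresponds to a permutation $\tau$ whose cycles use only edges of $G\jjoin H$: each cycle either lies entirely in $V(G)$, lies entirely in $V(H)$, or is a crossing cycle alternating between the two halves via join edges. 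A straightforward vertex count shows such a crossing cycle uses equal numbers of ``up'' edges $g_a\to h_a$ and ``down'' edges $h_b\to g_b$.

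Next I would extract the data $(R,B,\tau_G,\tau_H)$ from each $\tau$. Let $R\subseteq J$ consist of the indices $i$ for which $(g_i,h_i)$ is a 2-cycle of $\tau$, contributing $(-1)^{|R|}$ to $\operatorname{sgn}(\tau)$. For each remaining crossing cycle of $\tau$, list its down-indices $\{d_s\}$ and up-indices $\{u_s\}$ in $J\sm R$, sort each list increasingly, and form handles $[d_s,u_s]$ by pairing the two sorted lists in order; an exchange argument shows this is the unique componentwise totally ordered matching, hence the unique allowable handle set compatible with that crossing cycle. The permutation $\tau_H$ is then built by concatenating the $H$-walks $h_{u_s}\to\cdots\to h_{d_s}$ in $\tau$ with the forced handle transitions $h_{d_s}\to w\to h_{u_{s'}}$, and dually $\tau_G$ via the conjugate handle set $B^*$. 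Each handle vertex $w$ for handle $[c,r]$ is forced to satisfy $\tau_H(c)=w$ and $\tau_H(w)=r$, inserting $w$ into a cycle without changing the cycle count and yielding $\operatorname{sgn}(\tau_H)=(-1)^{|B|}\operatorname{sgn}(\sigma_H)$ for the handle-contracted permutation $\sigma_H$ on $V(H)\sm R$; similarly $\operatorname{sgn}(\tau_G)=(-1)^{|B|}\operatorname{sgn}(\sigma_G)$.

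A direct cycle count comparing the crossing cycles of $\tau$ to those of $\sigma_G$ and $\sigma_H$ then establishes
\[
\operatorname{sgn}(\tau)=(-1)^{|R|+|B|}\operatorname{sgn}(\tau_G)\operatorname{sgn}(\tau_H),
\]
so reorganizing $\sum_{\tau}$ by $(R,B)$ and then by $(\tau_G,\tau_H)$ recovers the stated formula. The main obstacle is verifying the bijection rigorously: one must show the sorted-pairing construction really gives the unique allowable handle set for each crossing cycle (the componentwise total-ordering plus disjointness in the definition of ``allowable'' is designed precisely for this), and that the cycle-count congruence holds in full generality, including when several crossing cycles of $\tau$ interleave their index sets. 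Tracking the merger and splitting of cycles as one passes between $\tau$ and $(\tau_G,\tau_H)$, and checking that signs align under arbitrary such interleavings, is the subtlest part of the argument.
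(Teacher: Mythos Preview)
The paper does not itself prove this theorem; it is quoted from \cite{BP} and used as a tool throughout. From the remark later in the paper that the number of $(R,B)$ pairs equals $\binom{2j}{j}$ ``looking at the Laplacian expansion,'' the argument in \cite{BP} proceeds by a generalized Laplace (cofactor) expansion of the block adjacency matrix
\[
\begin{pmatrix} A_G & C \\ C^{T} & A_H \end{pmatrix}
\]
along the $j$ join rows and columns, identifying the resulting complementary minors with the determinants $|(G\sm R^*)_{B^*}|$ and $|(H\sm R)_B|$. Your Leibniz/Harary cycle-decomposition route is a genuinely different and in principle workable approach.

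That said, there is a concrete gap in your construction of $B$. You propose to sort the up- and down-indices \emph{within each crossing cycle separately} and pair them in order. When two crossing cycles of $\tau$ interleave, this can fail to produce an allowable handle set. For example, with $j=4$, take one crossing cycle using up-index $1$ and down-index $4$, and a second using up-index $2$ and down-index $3$. Your per-cycle recipe yields the handles $[4,1]$ and $[3,2]$, which are not componentwise comparable and hence not allowable. Sorting globally across all crossing cycles would instead give the allowable pair $[3,1],[4,2]$, but then the induced $\tau_H$ merges the two $H$-walks into a single cycle while $\tau$ had two crossing cycles, and the cycle-count bookkeeping needed for the sign identity
\[
\operatorname{sgn}(\tau)=(-1)^{|R|+|B|}\operatorname{sgn}(\tau_G)\operatorname{sgn}(\tau_H)
\]
becomes genuinely delicate. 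You flag this issue in your last paragraph, but it is not a residual detail: it is the heart of the argument, and without resolving it neither the bijection nor the sign identity is established. The Laplace-expansion route in \cite{BP} sidesteps this combinatorics entirely by working with minors rather than with individual permutations.
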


The following corollary, which is equivalent to Theorem 1 in \cite{R}, is immediate by the preceding theorem. 

\begin{corollary}\label{cor}
$$|G \stackrel{1} \asymp H| = |G||H| - |G \setminus \{-1\}| \cdot |H \setminus \{1\}|.$$
\end{corollary}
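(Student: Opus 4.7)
The plan is to specialize Theorem \ref{sum} to the case $j=1$ and observe that the double sum collapses to only two nontrivial terms. For $j=1$, the joining set is $J=\{1\}$, so the outer sum over $R\subseteq J$ has exactly two summands, corresponding to $R=\emptyset$ and $R=\{1\}$.

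Next I would argue that in each case the inner sum over allowable handle sets $B$ reduces to the single term $B=\emptyset$. A handle $[c,r]$ requires $c,r \in J\setminus R$, but in our situation $|J\setminus R|\le 1$ in both cases, which makes it impossible to form any handle at all (as a handle $[c,r]$ involves two vertices of $H$ that remain after the removal). Hence $B=\emptyset$ is forced, $B^*=\emptyset$, and $|B|=0$.

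Plugging into the formula of Theorem \ref{sum}, the $R=\emptyset$ term contributes $(-1)^{0+0}|G||H|=|G||H|$, and the $R=\{1\}$ term contributes $(-1)^{1+0}|G\setminus\{-1\}|\cdot|H\setminus\{1\}|$, since $R^{*}=\{-1\}$. Adding these yields the claimed identity.

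The main obstacle, if any, is purely bookkeeping: verifying that no nonempty allowable handle set can arise when $|J\setminus R|<2$ and confirming that the conjugation $R\mapsto R^{*}$ acts as $\{1\}\mapsto\{-1\}$ in the $j=1$ case. Both are immediate from the definitions of allowable handle sets and of $R^{*}$ given just before Theorem \ref{sum}, so the corollary follows essentially by inspection.
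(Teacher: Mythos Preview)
Your proposal is correct and follows exactly the approach the paper indicates: the paper simply notes that the corollary ``is immediate by the preceding theorem,'' and your argument is precisely the specialization of Theorem~\ref{sum} to $j=1$, with the observation that $J=\{1\}$ forces $R\in\{\emptyset,\{1\}\}$ and that no allowable handle set can be nonempty when $|J\setminus R|\le 1$. There is nothing to add.
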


The next corollary and proceeding figure below illustrate how to express the 2-join of two graphs using the \Cref{sum}.
\begin{corollary}

\begin{equation*}
    \begin{split}
    |G \stackrel{2} \asymp H| =& |G||H| - |G \setminus \{-1\}| \cdot |H \setminus \{1\}| - |G \setminus \{-2\}| \cdot |H \setminus \{2\}| \\
    &+ |G \setminus \{-1,-2\}| \cdot |H \setminus \{1,2\}| \\
    &- |G_{[-1,-2]}| \cdot |H_{[2,1]} - |G_{[-2,-1]}| \cdot |H_{[1,2]}| 
    \end{split}
\end{equation*}
\end{corollary}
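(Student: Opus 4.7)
The plan is to derive the corollary directly from Theorem~\ref{sum} by specializing to $j=2$ and enumerating every pair $(R,B)$ that contributes to the double sum. Since $J=\{1,2\}$, there are only four subsets $R\subset J$ to consider, and for each one I need to list the allowable handle sets $B$ drawn from $J\sm R$.

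First I would handle the ``pure deletion'' contributions, those with $B=\varnothing$. Letting $R$ range over $\varnothing,\{1\},\{2\},\{1,2\}$, these produce exactly the four terms
\[
|G||H|\;-\;|G\sm\{-1\}|\,|H\sm\{1\}|\;-\;|G\sm\{-2\}|\,|H\sm\{2\}|\;+\;|G\sm\{-1,-2\}|\,|H\sm\{1,2\}|,
\]
with signs $(-1)^{|R|}$. Next I would argue that whenever $|J\sm R|<2$ the only allowable handle set is $B=\varnothing$, since each handle requires two distinct vertices of $J\sm R$ and the allowability condition forbids repeated vertices. This rules out any further contribution from $R=\{1\},\{2\},\{1,2\}$.

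The remaining case is $R=\varnothing$ with a nonempty allowable $B\subseteq\{[u,v]:u,v\in\{1,2\},\ u\neq v\}$. Since a single handle uses both available vertices, $|B|$ cannot exceed $1$, so the only nonempty choices are $B=\{[1,2]\}$ and $B=\{[2,1]\}$. Using the definition of the conjugate handle set (vertex conjugation together with direction reversal), these yield $B^{*}=\{[-2,-1]\}$ and $B^{*}=\{[-1,-2]\}$, respectively. Each carries the sign $(-1)^{|R|+|B|}=-1$, producing the two terms $-|G_{[-2,-1]}|\,|H_{[1,2]}|$ and $-|G_{[-1,-2]}|\,|H_{[2,1]}|$.

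Assembling all six contributions gives exactly the formula in the statement, so the proof reduces to a careful bookkeeping exercise. The only real obstacle is to make sure nothing is missed or double-counted in the enumeration; in particular, I would explicitly verify that $B=\{[1,2],[2,1]\}$ is \emph{not} allowable (both handles share vertices, violating the allowability condition) and that the direction reversal in forming $B^{*}$ is applied correctly, as this is what pairs $H_{[1,2]}$ with $G_{[-2,-1]}$ rather than $G_{[-1,-2]}$.
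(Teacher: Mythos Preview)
Your proposal is correct and is exactly the intended approach: the paper states this corollary as a direct illustration of Theorem~\ref{sum} for $j=2$ without giving a separate proof, and your enumeration of the six $(R,B)$ pairs, together with the sign and conjugation bookkeeping, is precisely the specialization that produces the displayed formula.
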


\begin{figure}[H]
    \centering
    \includegraphics[width = 1 \textwidth]{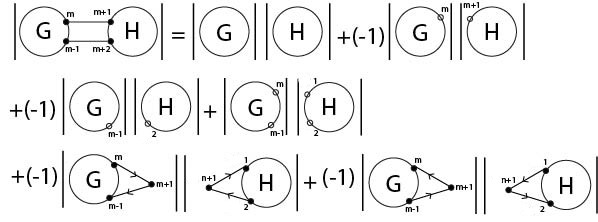}
    \caption{Determinant of $G \overset{2}{\asymp} H$}
    \label{twoconnect}
\end{figure}

Intuitively, the sum can be understood following Harary’s interpretation in \cite{H} of the determinant as the sum of the determinants of all spanning disjoint directed cycles. Either the joining edge is not used in a directed cycle (as in the first term in \Cref{twoconnect}), the edge is its own directed 2-cycle (as in the second through fourth terms in \Cref{twoconnect}), or the edge is part of a pair that has parts of its cycle in both graphs (as in the last two terms). 

\section{An Equivalence Relation}


For a given positive integer $j$, we can define an equivalence relation $\jsim$. We use the terms of the summation in \Cref{sum} as a motivation for this definition.

\begin{definition}\label{jrel}
For $G, H \in \mathbb{G}_j$ and for any vertex deletion sets $R_1,R_2$ and corresponding allowable handle sets $B_1, B_2$, $G \jsim H$  if and only if
$$ 
      |(G \setminus R_1\cup R_2^*)_{B_1\cup B_2^*}| = |(H \setminus R_1\cup R_2^*)_{B_1\cup B_2^*}|.\label{eq1}
$$
\end{definition}

Clearly this is an equivalence relation on $\mathbb{G}_j$. We define the quotient $\mathcal{G}_j = \mathbb{G}_j/\jsim$. The following theorem shows that the $j$-join is a well-defined operation on the quotient set. As a consequence of the following theorem, we can define an induced $j$-join product on the equivalence classes, 

$$[G]\jjoin [H]=[G\jjoin H].$$

\begin{theorem}\label{well-defined}
For a fixed integer $j>0$, the $j$-join is a well-defined binary operation over the equivalence relation $\jsim$.
\end{theorem}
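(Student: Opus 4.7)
The goal is that $G_1 \jsim G_2$ and $H_1 \jsim H_2$ together imply $G_1 \jjoin H_1 \jsim G_2 \jjoin H_2$. My first move is the standard reduction through $G_2 \jjoin H_1$: by transitivity of $\jsim$, the conclusion splits into two one-sided claims, namely that replacing only the left factor (resp.\ only the right factor) by a $\jsim$-equivalent graph preserves the $\jsim$-class of the join. Both sides are proved by the same argument (with $G$ and $H$ swapped in the reasoning), so I focus on the left-sided version: assuming $G_1 \jsim G_2$, show $G_1 \jjoin H \jsim G_2 \jjoin H$ for every $H \in \mathbb{G}_j$.

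For this one-sided claim, fix arbitrary vertex deletion sets $R_1, R_2$ and allowable handle sets $B_1, B_2$, set $R = R_1 \cup R_2^*$ and $B = B_1 \cup B_2^*$, and try to show
$$
|((G_1 \jjoin H) \sm R)_B| \;=\; |((G_2 \jjoin H) \sm R)_B|.
$$
The strategy is to realize $((G_i \jjoin H) \sm R)_B$ as a $j'$-join (with $j' \le j$) of a modified $G_i$ and a modified $H$, and then invoke Theorem \ref{sum} to expand its determinant as
$$
\sum_{R',B'} (-1)^{|R'|+|B'|}\,\bigl|(\widetilde G_i \sm R'^*)_{B'^*}\bigr|\cdot \bigl|(\widetilde H \sm R')_{B'}\bigr|,
$$
where the shape of each $G$-side factor is determined by $(R, B, R', B')$ and is independent of whether $i=1$ or $2$. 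Since $G_1 \jsim G_2$ says precisely that every such determinant of a deletion/handle modification agrees for $G_1$ and $G_2$, and the $H$-side factor is literally the same in both sums, the two expansions agree term by term.

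The principal obstacle is actually producing the decomposition of the modified join as a genuine $j'$-join. Two complications arise: (a) $R$ may delete one of the ``last'' $j$ vertices of $G_i$ or one of the ``first'' $j$ vertices of $H$, which simultaneously wipes out the matching join edges and so reduces the effective join parameter; and (b) a handle $[u,v]\in B$ may be a \emph{cross handle}, with one endpoint in $V(G_i)$ and the other in $V(H)$, producing a new handle vertex that belongs to neither original factor. I would handle (a) by tracking which of the $j$ join pairs survive the deletions and packaging the survivors as the $j'$ join pairs of the modified graphs, folding any ``widowed'' join-vertex deletions into the $G$-side or $H$-side modification. I would handle (b) by assigning each cross handle's new vertex to one side of the join—say the side containing its target endpoint—so that it becomes an appended vertex of $\widetilde H$ carrying a prescribed incoming edge from a $G$-side vertex, which fits naturally into the $j'$-join framework. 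The crucial point is that all of this bookkeeping depends only on $(R, B)$, not on $i$, so once the decomposition is in place the term-by-term comparison via $G_1 \jsim G_2$ closes the argument.
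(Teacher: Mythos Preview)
Your core strategy---expand the determinant of the modified join via Theorem~\ref{sum} and compare term by term using the definition of $\jsim$---is exactly what the paper does. But you have manufactured obstacles that do not exist, and in doing so you have missed the one observation that makes the proof short.

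Recall that in Definition~\ref{jrel} the deletion sets satisfy $R_1\subseteq\{1,\dots,j\}$ and $R_2^*\subseteq\{-1,\dots,-j\}$, and the handle sets $B_1,B_2^*$ live on those same vertex ranges. Now use the standing hypothesis that every graph in $\mathbb{G}_j$ has at least $2j$ vertices. In $G_i\jjoin H$, the first $j$ vertices are the first $j$ vertices of $G_i$, and since $|V(G_i)|\ge 2j$ these are \emph{disjoint} from the last $j$ vertices of $G_i$ where the join edges attach. Likewise the last $j$ vertices of $G_i\jjoin H$ are the last $j$ of $H$, disjoint from the first $j$ of $H$. Consequently $R_1\cup R_2^*$ never touches a join vertex, and every handle in $B_1\cup B_2^*$ has both endpoints on the same side. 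Your complications (a) and (b) simply cannot occur, and the decomposition is literally
\[
\bigl((G_i\jjoin H)\setminus(R_1\cup R_2^*)\bigr)_{B_1\cup B_2^*}
\;=\;
(G_i\setminus R_1)_{B_1}\ \jjoin\ (H\setminus R_2^*)_{B_2^*},
\]
a genuine $j$-join (not $j'$-join) with no bookkeeping needed. The paper records this in one line and then applies Theorem~\ref{sum} directly, handling both factors at once rather than passing through $G_2\jjoin H_1$ by transitivity. Your proposed workaround for cross handles---assigning the handle vertex to one side and calling the result a $j'$-join---would not actually fit the rigid form of Definition~\ref{jopr}, so it is fortunate that it is never needed.
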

\begin{proof}
Let $G_1, G_2, H_1, H_2 \in \mathbb{G}_j$ where we recall that their orders are greater than $2j$. Assume that 
$G_1 \jsim G_2 $ and $H_1 \jsim H_2$. We need to show that 
$(G_1\jjoin H_1) \jsim (G_2 \jjoin H_2)$.

Using \Cref{jrel}, we consider
$|((G_1\jjoin H_1) \setminus R_1\cup R_2^*)_{B_1\cup B_2^*}|$. We note that since since $G_1$ has at least $2j$ vertices and $R_1, B_1$ act on the first $j$ vertices of $G_i \jjoin H_i$, which are  not the  vertices involved in the $j$-join. Similarly, the conjugate sets only act on $H_i$. Using this fact and \Cref{sum}, we have
\begin{eqnarray*}
&&|((G_1\jjoin H_1) \setminus R_1\cup R_2^*)_{B_1\cup B_2^*}|\\ &=&
|(G_1\setminus R_1)_{B_1})\jjoin (H_1 \setminus R_2^*)_{B_2^*}|\\
&=& \displaystyle{\sum_{R \subset J}}\sum_B (-1)^{|R|+|B|}|(G_1 \setminus R_1\cup R^* )_{ B_1\cup B^*}| |(H_1 \setminus   R \cup R_2^*)_{ B\cup B_2^*})|\\
&=&\displaystyle{\sum_{R \subset J}}\sum_B (-1)^{|R|+|B|}|(G_2 \setminus  R_1\cup R^*)_{B_1\cup B^* }| |(H_2 \setminus   R\cup R_2^*)_{ B\cup B_2^*})|\\
&=&|((G_2\jjoin H_2) \setminus R_1\cup R_2^*)_{B_1\cup B_2^*}|,
\end{eqnarray*}
where the third equality follows from the fact that $G_1 \jjoin G_2$ and $H_1 \jjoin H_2$.
\end{proof}

\subsection{Equivalence Classes}
In this section, we will explore the necessary conditions for the existence of an identity element, a zero, and other equivalence classes. For simplicity, we shall use $[G]$ instead of $[G]_j$ when the $j$ is clear from the context.
\subsubsection{The Identity Class}{\label{idsection}}
  We say that $I\in \mathbb{G}_j$ is an \textit{identity graph} if and only if for any $G\in\mathbb{G}_j$, $[I\jjoin G] =[G\jjoin I] = [G]$. That is, that joining any graph by an identity graph does not change its equivalence class and thus, in particular, preserves its determinant. 

\begin{theorem}\label{ID}
Let $j\in\N$ and $I\in \mathbb{G}_j$. If for any vertex deletion sets $R_1,R_2$ and corresponding handle sets $B_1, B_2$, 
$$|(I\sm{R_1\cup R_2^*})_{B_1\cup B_2^*}|=
            \begin{cases} 
              (-1)^{|R_1|+|B_1|} &\text{if } R_1=R_2, B_1=B_2 \\
              0&\text{ otherwise},
            \end{cases}$$
then $I$ is an identity graph.
\end{theorem}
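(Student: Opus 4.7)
My plan is to verify both halves of the identity condition, $[I \jjoin G] = [G]$ and $[G \jjoin I] = [G]$, directly from Definition~\ref{jrel}. The two arguments are symmetric, so I would focus on $I \jjoin G$. I must show that for any $G \in \mathbb{G}_j$ and any deletion/handle data $R_1, R_2, B_1, B_2$,
\begin{equation*}
|((I \jjoin G) \sm (R_1 \cup R_2^*))_{B_1 \cup B_2^*}| = |(G \sm (R_1 \cup R_2^*))_{B_1 \cup B_2^*}|.
\end{equation*}

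The first step, which parallels the commutation argument used in the proof of Theorem~\ref{well-defined}, is to observe that since $I, G \in \mathbb{G}_j$ have at least $2j$ vertices, the sets $R_1, B_1$ act only on the first $j$ vertices of $I$ (disjoint from any join vertex), while $R_2^*, B_2^*$ act only on the last $j$ vertices of $G$. Therefore the deletions and handle attachments commute past the join:
\begin{equation*}
((I \jjoin G) \sm (R_1 \cup R_2^*))_{B_1 \cup B_2^*} = (I \sm R_1)_{B_1} \jjoin (G \sm R_2^*)_{B_2^*}.
\end{equation*}

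The next step is to expand this join via Theorem~\ref{sum}. Summing over $R \subseteq J$ and allowable handle sets $B$,
\begin{equation*}
|(I \sm R_1)_{B_1} \jjoin (G \sm R_2^*)_{B_2^*}| = \sum_{R,B}(-1)^{|R|+|B|}\,|(I \sm (R_1 \cup R^*))_{B_1 \cup B^*}|\cdot|(G \sm (R \cup R_2^*))_{B \cup B_2^*}|.
\end{equation*}
The hypothesis on $I$ is deliberately shaped as a Kronecker delta on decomposition data: the factor $|(I \sm (R_1 \cup R^*))_{B_1 \cup B^*}|$ vanishes unless $R = R_1$ and $B = B_1$, in which case it equals $(-1)^{|R_1|+|B_1|}$. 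Only one term in the sum survives, and the sign $(-1)^{|R_1|+|B_1|}$ coming from Theorem~\ref{sum} cancels the sign from the hypothesis, leaving exactly $|(G \sm (R_1 \cup R_2^*))_{B_1 \cup B_2^*}|$, as required. The case $[G \jjoin I] = [G]$ is handled by the same argument, with the hypothesis instead collapsing the sum to the term with $R = R_2,\ B = B_2$.

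The main obstacle, in my view, is not the algebraic collapse, which is essentially automatic once the hypothesis is read as a delta function on decomposition terms. Rather, the subtlety lies in the bookkeeping of the commutation step: one must be careful that the ``first $j$" vertices referenced by $R_1$ in the joined graph $I \jjoin G$ really coincide with the first $j$ vertices of $I$, and similarly that $R_2^*$ picks out the last $j$ vertices of $G$ rather than some vertex in the middle of the join. This is guaranteed by the $2j$-vertex assumption but is the place where the notation could easily become tangled.
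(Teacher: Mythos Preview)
Your proposal is correct and follows essentially the same argument as the paper: commute the modifications past the join using the $2j$-vertex assumption (as in Theorem~\ref{well-defined}), expand with Theorem~\ref{sum}, and use the hypothesis on $I$ as a Kronecker delta to collapse the sum to the single term where $R=R_1$, $B=B_1$, with the signs cancelling. The paper's proof is slightly terser but structurally identical, including the remark that the right-identity case is handled symmetrically.
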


\begin{proof}
Let $j \in \N, G \in \mathbb{G}_j,$ and $I$ be as above. Then using a similar argument as in the proof of \Cref{well-defined},
\begin{equation*}
\begin{split}
&|(I\jjoin G) \sm (R_1\cup R_2^*))_{B_1\cup B_2^*}|
=|(I\sm R_1)_{B_1}\jjoin (G\sm R_2^*)_{B_2^*}|\\
&=\sum_{B}\sum_{R \subset J} (-1)^{|R|+|B|}|(I \sm (R_1\cup R^*)_{(B_1\cup B^* )}| |(G \sm ( R\cup R_2^*))_{B\cup B_2^*}|\\
&=(-1)^{|R_1|+|B_1|}|(I \sm (R_1\cup R_1^*)_{(B_1\cup B_1^*)}| |(G \sm (R_1\cup R_2^*))_{B_1\cup B_2^*}|\\
&= |(G \sm (R_1\cup R_2^*))_{B_1\cup B_2^*}|.
\end{split}
\end{equation*}

The third equality results from the properties of $I$ causing all terms of the summation to vanish except where $R=R_1$and $B = B_1$. Thus we have shown that if $I$ satisfies the conditions of the theorem, then $[I\jjoin G] = [G]$. Similarly, $[G] = [G \jjoin I]$. 
\end{proof}

For the 1-join, any path graph of order $4k$ is a member of the identity class. 

The following figure demonstrates the existence of representatives of the identity classes for the $j-$join operation for $j\geq1$. The graph $G$ has $m$ vertices labeled $j+1$ through $m+j$.
\begin{example}{$j$-Join Identity}\label{jjoinid}
\begin{center}
\begin{tikzpicture}[scale=1]
\draw (0,0) circle (1.5cm);
\node (C) at (0:0) {$|G|=(-1)^j$};
\tikzstyle{every node}=[circle,fill=black,inner sep=0.5mm,yshift=0,xshift=0]
\node  [label={[xshift=0cm ]left:1}] (1) at  (4,1)  {};
\node [label={[xshift=0cm ]right:m+2j}] (m) at (6,1) {};
\node  [label={[xshift=0cm ]left:j}] (2) at  (4,-1)  {};
\node [label={[xshift=0cm ]right:m+j+1}] (m-1) at (6,-1) {};
\tikzstyle{every node}=[circle,fill=black,inner sep=0.25mm,yshift=0,xshift=0]
\node () at (5,.8) {};
\node () at (5,.6) {};
\node () at (5,.4) {};
\node () at (5,-.8) {};
\node () at (5,-.6) {};
\node () at (5,-.4) {};
\tikzstyle{every node}=[circle,fill=white,inner sep=0.25mm,yshift=0,xshift=0]
\node [label={[xshift=0cm,yshift=-1cm ]above:$j$ copies of $P_2$}] () at (5,-.25) {};

\draw (1)--(m);
\draw (2)--(m-1);
\end{tikzpicture}
\end{center}
\captionof{figure}{Directed edges from the copies of $P_2$ to $G$ are not necessary unless connectivity is desired.}\label{jiden}
\end{example}

\begin{theorem}\label{id-exist}
    The graph in \Cref{jiden} is a representative of the identity class for any $j$.
\end{theorem}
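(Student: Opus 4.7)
The plan is to verify that the graph $I$ in Figure \ref{jiden} satisfies the sufficient conditions of Theorem \ref{ID}. I will begin from the structural observation visible in the figure: $I$ is the disjoint union of $G$, whose determinant is chosen so that $|G|=(-1)^j$, and $j$ bidirectional copies of $P_2$, the $i$-th joining vertex $i$ with vertex $m+2j+1-i$ (which in the negative-index convention is vertex $-i$). Because the vertex deletions in $R_1\cup R_2^*$ and the handles in $B_1\cup B_2^*$ act only on the first and last $j$ vertices of $I$, the copy of $G$ remains a disconnected component of $(I\sm(R_1\cup R_2^*))_{B_1\cup B_2^*}$. Therefore the determinant factors as $|G|\cdot|I^*|=(-1)^j|I^*|$, where $I^*$ is the induced graph on the first and last $j$ original vertices together with all attached handle vertices. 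It then suffices to prove $|I^*|=(-1)^{j+|R_1|+|B_1|}$ when $R_1=R_2$ and $B_1=B_2$, and $|I^*|=0$ otherwise.

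For $|I^*|$ I will use the signed cycle cover expansion of the determinant. The essential rigidity is that every handle vertex $w$ coming from a handle $[s,e]$ (in either $B_1$ or $B_2^*$) has exactly one in-neighbor ($s$) and one out-neighbor ($e$), so any cycle cover $\sigma$ of $I^*$ must satisfy $\sigma(s)=w$ and $\sigma(w)=e$. Once these forced assignments are made, every remaining vertex of $I^*$ has a unique admissible out-edge, namely the bidirectional $P_2$ edge to its $\pm$-conjugate. Consequently $\sigma$ is uniquely determined by the combinatorial data, and the only question is whether it is a bijection.

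The heart of the argument is showing that such a bijective $\sigma$ exists exactly when $R_1=R_2$ and $B_1=B_2$. Let $S_i,E_i$ denote the sets of starts and ends of the handles in $B_i$, and $W_+,W_-$ the handle vertices attached via $B_1$ and $B_2^*$. Equating the set of $\sigma$-images with the vertex set of $I^*$ and cancelling the contributions $W_+\cup W_-\cup E_1\cup(-S_2)$ common to both sides yields the two set equations $R_1\cup E_1=R_2\cup E_2$ and $R_1\cup S_1=R_2\cup S_2$ inside $\{1,\dots,j\}$. Since $R_i,S_i,E_i$ are pairwise disjoint, intersecting these equations forces $R_1=R_2$, and subtracting then gives $S_1=S_2$ and $E_1=E_2$. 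Finally, the allowability condition requires the unique order-preserving matching between a given set of starts and a given set of ends, so $S_1=S_2$ and $E_1=E_2$ together force $B_1=B_2$. In every other case the candidate $\sigma$ has a repeated image or an undefined out-edge, so no cycle cover exists and $|I^*|=0$.

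When $R_1=R_2$ and $B_1=B_2$, the forced cycle cover of $I^*$ consists of one six-cycle $s\to w\to e\to -e\to w'\to -s\to s$ for each handle $[s,e]\in B_1$ together with a two-cycle $v\leftrightarrow -v$ for each non-handle vertex among the surviving first $j$ vertices. A count gives $n=2(j-|R_1|)+2|B_1|$ vertices and $c=j-|R_1|-|B_1|$ cycles, so $|I^*|=(-1)^{n-c}=(-1)^{j+|R_1|+|B_1|}$, and multiplication by $(-1)^j$ recovers the required $(-1)^{|R_1|+|B_1|}$. I expect the main obstacle to be the image-matching step of the third paragraph: the bookkeeping of the forced handle images against the images of the non-handle $P_2$ assignments is delicate, and the use of the allowability hypothesis to pass from equality of start/end sets to equality of handle sets is essential for concluding $B_1=B_2$ rather than some other pairing.
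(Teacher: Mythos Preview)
Your proof is correct and, like the paper's, reduces to Harary's cycle--cover expansion on the $P_2$ part of $I$.  The two arguments diverge in how they establish that a nonzero determinant forces $R_1=R_2$ and $B_1=B_2$.  The paper treats these separately: first, $R_1\neq R_2$ leaves an isolated vertex or a dangling handle endpoint; then, assuming $R_1=R_2$, it traces a cycle through an unmatched handle vertex, lists the $P_2$-labels $a_1,\dots,a_k$ encountered, and uses the minimality of $a_1$ to locate two consecutive handles violating the componentwise ordering.  Your approach is more uniform: you observe that every vertex of $I^*$ has a forced image, so there is at most one candidate cycle cover, and you test its bijectivity by matching images against $V(I^*)$.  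The resulting set identities $R_1\cup E_1=R_2\cup E_2$ and $R_1\cup S_1=R_2\cup S_2$ give $R_1=R_2$, $S_1=S_2$, $E_1=E_2$ in one stroke, and then allowability (which forces the order-preserving matching of starts to ends) yields $B_1=B_2$.  This buys you a cleaner and more symmetric argument, and it also makes the sign computation in the equal case fully explicit: you obtain the factor $(-1)^{|B_1|}$ from the $6$-cycles, whereas the paper's proof records only the $(-1)^{|R_1|}$ from the deleted $P_2$'s and does not return to the handle contribution.

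One small caveat: the figure caption permits optional directed edges from the $P_2$ copies into $G$, in which case $G$ is not literally a disconnected component.  Your factorisation $|I|=|G|\cdot|I^*|$ still holds, but the reason is that no directed cycle can enter $G$ along such an edge and return (there are no edges from $G$ back to the $P_2$'s), so these edges contribute to no cycle cover.  You may wish to phrase the first paragraph accordingly.
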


\begin{proof}
Consider any graph $G$, with $|G|=(-1)^j$ and order $m$. Let $I_j$ be a disjoint union of $G$ with $j$ copies of $P_2$. We label the vertices of $I_j$ as shown in \Cref{jiden}. Since $|P_2|=-1$, then it is clear that $|I_j|=1$.

Next, note that in $(I_j\setminus R_1 \cup R_2^*)_{B_1 \cup B_2^*}$, if $R_1 \neq R_2$, then either there exists at least one copy of $P_2$ where one vertex is removed and the other is isolated or the single vertex is part of a handle. In the former case, we have that the determinant is zero. In the latter case, we use Harary's definition from \cite{H} of the determinant of a graph  and note that there are no spanning directed cycle decompositions of $I_j$. Thus in order for $(I_j\setminus R_1 \cup R_2^*)_{B_1 \cup B_2^*}$ to be nonzero it must be the case that $R_1 = R_2$.

Now, assume $R_1 = R_2$. Then, each vertex in $R_1$ results in the deletion of a copy of $P_2$ which results in a change in the determinant by a factor of $-1$. Thus there will be a total change of $(-1)^{|R_1|}$ to the determinant.

Suppose that we have $(I_j\setminus R_1 \cup R_1^*)_{B_1 \cup B_2^*}$ with $B_1 \neq B_2$ and $\det((I_j\setminus R_1 \cup R_1^*)_{B_1 \cup B_2^*}) \neq 0.$ Then, there exists at least one handle $[a,b]$ in $B_1$ or $B_2$ that is not in the other handle set. Since $|(I_j\setminus R_1 \cup R_1^*)_{B_1 \cup B_2^*}| \neq 0$, then the vertex of that handle must be part of a directed cycle $C$ that is a subgraph of $(I_j\setminus R_1 \cup R_1^*)_{B_1 \cup B_2^*}$. This implies that $C$ contains at least 4 copies of $P_2$ from $I_j$. Let $n$ be the least positive integer such that $n \in V(C)$. We observe that vertex $n$ must be contained in one of the copies of $P_2$. So, either $(n, -n)$ or $(-n, n) \in E(C)$. Suppose $(n, -n) \in E(C)$. 

We observe that, beginning with vertex $n$, to travel along the directed cycle $C$, we move through a copy of $P_2$, then a handle, then followed by another copy of $P_2$ (in the opposite direction), and so on until arriving back at $n$. Formally, let $a_1, a_2, \dots, a_k \in \{1, 2, \dots, j\}$ be the positive vertices in $P=I_j\cap C$ of the directed cycle starting and ending at $n,$ in the order that they appear in $C$, so that $a_1 = a_k = n$. It follows  that, for odd $i \in \{1, \dots, k-1\}$, $(a_i, -a_i) \in E(C)$ and the handle on $P$ from $-a_i$ to $-a_{i+1}$ is traversed in $C$. Additionally, for even $i \in \{1, \dots, k-1\}$, $(-a_i, a_i) \in E(C)$ and the handle on $P$ from $a_i$ to $a_{i+1}$ is traversed in $C$. Note that $k\geq 4$.

 Furthermore, since $n$ is the minimum of $\{a_1, a_2, \dots, a_{k-1}\}$, $a_1 < a_3$ and $a_{k-2} > a_k$. This implies that there must be some $i \in \{ 1, \dots, k-3\}$ with $a_i < a_{i+2}$ and $a_{i+1} > a_{i+3}$. But then the handles $[a_i, a_{i+1}]$ and $[a_{i+2}, a_{i+3}]$ must both be in $B_1$ or $B_2$. But this implies that $B_1$ or $B_2$ is not an allowable handle set.

The case when $(-n,n) \in E(C)$ is handled analogously. Thus, we see that, if $B_1$ and $B_2$ are allowable handle sets and $\det((I_j\setminus R_1 \cup R_2^*)_{B_1 \cup B_1^*}) \neq 0,$ it must be the case that $B_1 = B_2.$
\end{proof}

Thus, we see that $(\mathcal{G}_j,\jjoin)$ is a monoid with identity as given above.

\section{Algebraic Structure of the Join Operation}

\begin{definition} 
  Let $S$ be a semigroup and $x,y\in S$. A \textbf{sandwich operation} $\bullet$ on $S$ is defined as $x\bullet y=xay$, where $a$ is an element of $S$.
\end{definition}

Hickey, in \cite{JBH}, showed that $S$ under this sandwich operation is also a semigroup denoted $(S,a)$ which we will call the sandwich semigroup on $a$.

Let $G$ be a representative of any equivalence class of graphs $[G]\in \mathcal{G}_j=\mathbb{G}_j/\jsim$ for some $j\in\N$. From the above, the equivalence class under the $j$-join is defined by all the modifications of $G$ given by $(G\sm R_1\cup R_2^*)_{B_1\cup B_2^*}$ where $R_i\subseteq  J = \set{1,...,j}$ and $B_i$ are handle sets on the vertices $J\sm R_i$ where the handles satisfy the conditions discussed in \Cref{sum}. This also defines the conjugate sets $R_2^*, B_2^*$.  

Now, consider the set of all allowable removal set and handle set pairs $$\set{(R,B)\mid R\subseteq J, B\subseteq \mathcal{P}({J-R\choose2}), B \text{ allowable}}.$$ From \cite{BP}, looking at the Laplacian expansion,  we see that there are $k=$ $2j\choose j$ elements. Let us enumerate these pairs as $\set{(R_1,B_1)=(\emptyset,\emptyset), (R_2,B_2)), ..., (R_k,B_k)}$. The choice of how these are numbered is arbitrary, but we need to fix a convention here for the rest of the paper. Now, define the functions $r_i:\mathcal{G}_j \rightarrow \Z$ as $r_i([G])=\det((G\sm R_i)_{B_i})$, where for $1\leq i\leq k$. We see that this function is well defined since any representative of the class will have the same determinant. Equivalently, we can define the conjugate modifications as $c_l:\mathcal{G}_j \rightarrow \Z$ as $c_l([G])=\det((G\sm R_l^*)_{B_l^*})$, $1\leq l\leq k$.\\

Now, we can define an operation from any graph equivalence class to a $k \times k$ integer matrix $\phi:\mathcal{G}_j\rightarrow M(k,\Z)$ as 
$$\phi([G])=[m_{il}]=[\det((r_i\circ c_l) (G))].$$


We can now prove the main results of our paper that the algebraic structure of the $j$-join under the $j$-join equivalence class is a sub-semigroup of the sandwich semigroup isomorphic to the $k \times k$ integer matrices under matrix multiplication. We begin with the following theorem that shows that our function is a homomorphism.
\begin{theorem}\label{main-thm}
Let $[G],[H]\in  \mathcal{G}_j$ and $\phi$ as in the discussion above. Then, 
$$\phi([G]\jjoin [H])=\phi([G\jjoin H])=\phi([G]) E_j \phi([H])$$
where $E_j=\left[e_{il}=
\begin{cases}
    (-1)^{|R|+|B|}& i=l\\
    0& \textrm{otherwise}
\end{cases}
\right].
$
\end{theorem}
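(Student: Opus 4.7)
The plan is to compute the $(i,l)$ entry of $\phi([G \jjoin H])$ directly from the definition and recognize the result as a sum over the diagonal that matches $\phi([G])\,E_j\,\phi([H])$. The key engine will be \Cref{sum} applied inside the modification brackets.

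First, I will unpack the entry. By definition,
\[
\phi([G \jjoin H])_{il} = \bigl|\bigl((G \jjoin H) \sm (R_i \cup R_l^*)\bigr)_{B_i \cup B_l^*}\bigr|.
\]
As in the proof of \Cref{well-defined}, the pair $(R_i, B_i)$ acts only on the first $j$ vertices of $G$ and the pair $(R_l^*, B_l^*)$ acts only on the last $j$ vertices of $H$; neither touches the $2j$ vertices involved in the join. Therefore the deletion/handle operations commute with the join and
\[
\bigl((G \jjoin H) \sm (R_i \cup R_l^*)\bigr)_{B_i \cup B_l^*} \;=\; (G \sm R_i)_{B_i} \;\jjoin\; (H \sm R_l^*)_{B_l^*}.
\]

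Second, I apply \Cref{sum} to the join on the right-hand side. Writing the summation index as a pair $(R,B)$ ranging over allowable deletion/handle pairs on the $j$ join-vertices of the right-hand factor, I get
\begin{align*}
\bigl|\phi([G \jjoin H])_{il}\bigr|
&= \sum_{R \subset J}\sum_B (-1)^{|R|+|B|}\, \bigl|\bigl((G\sm R_i)_{B_i} \sm R^*\bigr)_{B^*}\bigr|\,\bigl|\bigl((H\sm R_l^*)_{B_l^*} \sm R\bigr)_B\bigr| \\
&= \sum_{R,B} (-1)^{|R|+|B|}\, \bigl|(G \sm (R_i \cup R^*))_{B_i \cup B^*}\bigr|\,\bigl|(H \sm (R \cup R_l^*))_{B \cup B_l^*}\bigr|,
\end{align*}
where in the second line I merged the nested deletion/handle operations (again using that the two pairs act on disjoint sets of vertices and that handles on opposite sides are independent).

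Third, I reindex the sum by the fixed enumeration $(R_1,B_1),\dots,(R_k,B_k)$ of allowable pairs: write $(R,B) = (R_p, B_p)$ so that $p$ ranges over $1,\dots,k$. Then the two determinant factors become exactly $\phi([G])_{ip}$ and $\phi([H])_{pl}$, giving
\[
\phi([G \jjoin H])_{il} \;=\; \sum_{p=1}^{k} (-1)^{|R_p|+|B_p|}\, \phi([G])_{ip}\, \phi([H])_{pl} \;=\; \bigl(\phi([G])\, E_j\, \phi([H])\bigr)_{il},
\]
since $E_j$ is diagonal with $(E_j)_{pp} = (-1)^{|R_p|+|B_p|}$. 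Because the equality holds entrywise for all $i,l$, the theorem follows. The identity $\phi([G]\jjoin [H]) = \phi([G \jjoin H])$ is immediate from \Cref{well-defined}, which guarantees that $\phi$ is well defined on equivalence classes.

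The main obstacle I expect is purely bookkeeping: verifying that the composition of deletions/handles on disjoint vertex sets really does factor as claimed (so that \Cref{sum} applies verbatim to the modified graphs), and confirming that the enumeration of allowable pairs matches on both sides of the matrix product. Both are consequences of the observation already used in the proof of \Cref{well-defined} that the indices corresponding to $(R_i,B_i)$ and $(R_l^*,B_l^*)$ live on vertices disjoint from the $j$ join-vertices, together with the fact that allowability of a handle set depends only on the component-wise ordering of its handles and is preserved under these disjoint modifications.
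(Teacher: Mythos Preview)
Your proof is correct and follows essentially the same route as the paper: factor the $(i,l)$ entry through the join using $|V(G)|,|V(H)|\ge 2j$, apply \Cref{sum}, and reindex the sum over allowable pairs $(R,B)$ as a sum over $p$ to recognize the product $\phi([G])E_j\phi([H])$. The only slip is the stray absolute-value bars around $\phi([G\jjoin H])_{il}$ in the second display; the entry is already a determinant, so those bars should be dropped.
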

\begin{proof}
Let $[m_{il}] = \phi([G\jjoin H]) = [\det((r_i\circ c_l) (G\jjoin H))]$ as defined above. Then, since $|V(G)|, |V(H)|\geq 2j$, then $r_i(G\jjoin H)=r_i(G)\jjoin H$ and $c_l(G\jjoin H) = G\jjoin c_l(H)$ and so $(r_i\circ c_l) (G\jjoin H) = r_i(G)\jjoin c_l(H) = (G\sm R_i)_{B_i}\jjoin (H\sm R_l^*)_{B_l^*}$ for some vertex removal sets $R_i, R_l$ and handle sets $B_i, B_l$.  

Then, calculating the determinant using \Cref{sum}, we have 
\begin{align*}
m_{il}=\det((r_i\circ c_l) (G\jjoin H))  = \det((G\sm R_i)_{B_i}\jjoin (H\sm R_l^*)_{B_l^*})=\\
\displaystyle{\sum_B\sum_{R \subset J}} (-1)^{|R|+|B|}|(G \sm R_i\cup R^*)_{B_i\cup B^*}| |(H \sm R\cup R_l^*)_{B\cup B_l^*})| 
\end{align*}
Note that the vertices of $R_i^*, B_i^*$ are never the same as those of $R,B$ since the number of vertices of $G$ is at least $2j$. We see this is the same for $H$ as well.

Notice that the $|(G \sm R_i\cup R^*)_{B_i\cup B^*}|$ are the elements of the $i^{th}$ row of $\phi([G])$ and, equivalently, the $|(H \sm  R\cup R_l^*)_{B\cup B_l^*})|$ are the elements of the $l^{th}$ column of $\phi([H])$. 

Then, we see that 
$$m_{il}=\sum_p \phi([G])_{ip}E_{j_{pp}}\phi([H])_{pl}$$
which shows that $\phi$ is a homomorphism of monoids from the $j-$equivalent graphs under the $j-$join operation to the $k\times k$ matrices under the sandwich operation with sandwich element $E_j$. Therefore we can define $$\phi([G])\bullet\phi([H])=\phi([G]) E_j \phi([H]) =\phi([G]\jjoin [H])=\phi([G\jjoin H]). $$
\end{proof}

\begin{theorem}\label{1to1}
The homomorphism $\phi:(\mathcal{G}_j,\jjoin)\to (M(k,\Z),E)$ is one-to-one where $(M(k,\Z),E)$ is the sandwich monoid with sandwich element $E$ as described above. 
\end{theorem}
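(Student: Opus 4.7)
The plan is to prove injectivity by unpacking the definitions and observing that the matrix $\phi([G])$ is essentially a complete bookkeeping device for every determinant that appears in the equivalence relation $\jsim$. Suppose $\phi([G]) = \phi([H])$. By the construction of $\phi$, this equality of matrices is equivalent to the assertion that for every pair of indices $i,l \in \{1,\dots,k\}$,
\[
\det\bigl((G\sm R_i\cup R_l^*)_{B_i\cup B_l^*}\bigr)
=
\det\bigl((H\sm R_i\cup R_l^*)_{B_i\cup B_l^*}\bigr).
\]

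The second step is to check that the enumeration $(R_1,B_1),\dots,(R_k,B_k)$ really exhausts every allowable pair of (removal set, handle set) on $J$. This is where the count $k=\binom{2j}{j}$ matters: the Laplacian-expansion argument referenced from \cite{BP} guarantees that these $k$ pairs are precisely all the allowable pairs $(R,B)$ with $R\subseteq J$ and $B$ an allowable handle set on $J\sm R$. Consequently, as $(i,l)$ ranges over $\{1,\dots,k\}^2$, the composite pair $(R_1,B_1),(R_2,B_2) = (R_i,B_i),(R_l,B_l)$ ranges over every pair appearing in \Cref{jrel}.

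Combining these two observations, we see that the hypothesis $\phi([G])=\phi([H])$ is literally the defining condition of $G\jsim H$ from \Cref{jrel}, hence $[G]=[H]$ in $\mathcal{G}_j$. Together with \Cref{main-thm} (which established that $\phi$ is a semigroup homomorphism into the sandwich monoid $(M(k,\Z),E_j)$), this completes the proof that $\phi$ is an injective homomorphism.

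The proof is thus essentially a tautological unwinding of the construction of $\phi$ from the equivalence relation; no real obstacle arises. The only subtle point worth remarking on is the bookkeeping verification that the indexing set used to define $\phi$ is in bijection with the set of pairs $(R,B)$ quantified over in \Cref{jrel}, so that no determinant constraint from the equivalence relation is omitted from the matrix and none is redundantly counted. Once this indexing is pinned down, injectivity is immediate.
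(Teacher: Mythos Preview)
Your proof is correct and follows essentially the same approach as the paper's own proof: both simply unwind the definitions of $\phi$ and of $\jsim$ to see that equality of the matrices is precisely the defining condition of the equivalence relation. The paper phrases it as the contrapositive (if $[G_1]\neq[G_2]$ then some matrix entry differs), while you argue directly, and you make explicit the bookkeeping point that the enumeration $(R_1,B_1),\dots,(R_k,B_k)$ exhausts all allowable pairs---a detail the paper leaves implicit.
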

\begin{proof}   
  This proof follows straight from definitions. If $[G_1], [G_2]\in \mathcal{G}_j$ such that $[G_1]\neq [G_2]$, then there exists $i,j$ such that $\det((r_i\circ c_l) (G_1))\neq \det((r_i\circ c_l) (G_2)).$ But then, this implies that $\phi(G_1)\neq \phi(G_2)$ as matrices. 
\end{proof}

The following lemma gives us a matrix representation for this monoid.

\begin{lemma}\label{Sandiso}[Hickey, \cite{JBH}]
Let $S$ be a semigroup and let $a\in S$. If the semigroup $(S, a)$ has identity element 1 then 
\begin{enumerate}[i.]
    \item $S$ has identity element 
    \item the elements $a, 1$ lie in the unit group of $S$ and are inverse to each other,
    \item $(S,a)\cong S$
\end{enumerate}
\end{lemma}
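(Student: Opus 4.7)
The plan is to extract everything from the single hypothesis that the sandwich identity $1\in(S,a)$ satisfies
\[
1ax \;=\; 1\bullet x \;=\; x \qquad \text{and} \qquad xa1 \;=\; x\bullet 1 \;=\; x \qquad \text{for all } x\in S.
\]
Each of (i)--(iii) follows from one of these equations together with a short routine computation in $(S,\cdot)$.

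For (i), I would read $1ax=x$ as saying that $e_L := 1a$ is a \emph{left} identity of $(S,\cdot)$, and read $xa1=x$ as saying that $e_R := a1$ is a \emph{right} identity. The classical one-line argument $e_L = e_L\cdot e_R = e_R$ then collapses these into a single two-sided identity $e := 1a = a1$ of $S$, which proves (i). Part (ii) is then immediate: the computations $1\cdot a = 1a = e$ and $a\cdot 1 = a1 = e$ show that $1$ and $a$ are mutual two-sided inverses in $(S,\cdot)$ with respect to $e$, so both belong to the unit group of $S$.

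For (iii), I would define $\phi:(S,a)\to (S,\cdot)$ by $\phi(x) = ax$ and check the three things a monoid isomorphism requires. Associativity gives
\[
\phi(x\bullet y) \;=\; \phi(xay) \;=\; a x a y \;=\; (ax)(ay) \;=\; \phi(x)\,\phi(y),
\]
so $\phi$ is a homomorphism. Injectivity uses the inverse produced in (ii): if $ax=ay$, then $1ax = 1ay$, i.e.\ $x=y$. Surjectivity uses the same inverse from the other side: for any $z\in S$,
\[
\phi(1z) \;=\; a(1z) \;=\; (a1)z \;=\; ez \;=\; z.
\]
Together with $\phi(1) = a1 = e$ (so identities go to identities), this shows $\phi$ is the desired monoid isomorphism.

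The only real ``obstacle'' is noticing in step (i) that one must work with the two candidate elements $1a$ and $a1$ separately and then glue them together by the left-right identity trick; after that, the verifications are essentially forced, because every step is either an application of associativity or of $1a = a1 = e$. An alternative that I would keep in reserve is to use $\phi(x) = xa$ instead: the proof is symmetric, and it would be the natural choice if the sandwich operation were written with $a$ on the other side.
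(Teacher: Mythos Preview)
Your proof is correct. The paper, however, does not supply its own proof of this lemma: it is quoted as a result of Hickey (reference~[JBH]) and used without argument, so there is nothing in the paper to compare your approach against. Your argument is the standard one --- identify $1a$ and $a1$ as one-sided identities, merge them via the left/right identity trick, and then use left multiplication by the newly found unit $a$ as the isomorphism --- and matches what one finds in Hickey's original paper.
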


Let $\gamma:(M(k,\Z),E)\to (M(k,\Z))$ be the isomorphism from part $iii$ from the lemma above. Then, define $\Phi = \gamma\circ\phi$. \Cref{main-thm} above implies that $(\Phi(\mathcal{G}_j), \cdot)$ is a submonoid of $(M(k,\Z),\cdot)$. 
\begin{corollary}
    The monoid $(\mathcal{G}_j,\jjoin)$ is isomorphic to a submonoid of $(M(k,\Z),\cdot)$
\end{corollary}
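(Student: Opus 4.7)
The plan is to show that the corollary falls out by assembling the three pieces the paper has just built: Theorem \ref{main-thm}, Theorem \ref{1to1}, and Hickey's Lemma (\Cref{Sandiso}). Before invoking Hickey's Lemma one must first verify its hypothesis, namely that the sandwich semigroup $(M(k,\Z),E)$ has an identity. This is the cleanest first step: since $E=E_j$ is a diagonal matrix whose diagonal entries are all $\pm 1$, we have $E^2=I$, so $E$ is invertible in $M(k,\Z)$ with $E^{-1}=E$. Then $E$ itself is an identity of the sandwich semigroup because $E\bullet X = E\cdot E\cdot X = X$ and $X\bullet E = X\cdot E\cdot E = X$ for every $X\in M(k,\Z)$.

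Next, I would invoke \Cref{Sandiso} to obtain the isomorphism $\gamma:(M(k,\Z),E)\to (M(k,\Z),\cdot)$. It is worth recording the explicit form for bookkeeping: the map $X\mapsto EX$ (equivalently $X\mapsto XE$, using $E^{-1}=E$) satisfies $\gamma(X\bullet Y) = E(XEY) = (EX)(EY) = \gamma(X)\gamma(Y)$, and is a bijection with inverse $X\mapsto EX$. Having an explicit $\gamma$ is not strictly required for the corollary, but it makes the composition with $\phi$ transparent.

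Now define $\Phi=\gamma\circ\phi$ as the paper already suggests. By Theorem \ref{main-thm}, $\phi$ is a monoid homomorphism from $(\mathcal{G}_j,\jjoin)$ to $(M(k,\Z),E)$, and by Theorem \ref{1to1} it is injective. Composing with the isomorphism $\gamma$ preserves both properties, so $\Phi:(\mathcal{G}_j,\jjoin)\to (M(k,\Z),\cdot)$ is an injective monoid homomorphism. The image $\Phi(\mathcal{G}_j)$ is therefore closed under matrix multiplication and contains the identity matrix (the image of the identity class guaranteed by \Cref{ID} and \Cref{id-exist}), so it is a submonoid of $(M(k,\Z),\cdot)$. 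Restricting the codomain to this image gives the desired monoid isomorphism.

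There is essentially no hard step here; the only substantive verification is checking that $E$ is a unit in $M(k,\Z)$ so that Hickey's Lemma applies, and this is immediate from $E^2=I$. The remainder is formal: composition of a homomorphism with an isomorphism is a homomorphism, injectivity is preserved, and the image of a monoid homomorphism is a submonoid of the target. If anything deserves care, it is making sure the identity element exhibited in \Cref{id-exist} really does map under $\phi$ to the identity $E$ of $(M(k,\Z),E)$, so that the isomorphism onto the image genuinely respects the monoid structure with identity; this follows directly from the defining property of the identity graph $I_j$, since $\phi([I_j])$ is the diagonal matrix with diagonal entries $(-1)^{|R_i|+|B_i|}$, which is exactly $E$.
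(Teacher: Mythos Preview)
Your proposal is correct and follows essentially the same route as the paper: invoke Hickey's Lemma to get the isomorphism $\gamma:(M(k,\Z),E)\to (M(k,\Z),\cdot)$, compose with the injective homomorphism $\phi$ from Theorems \ref{main-thm} and \ref{1to1}, and conclude that the image is a submonoid. You actually supply a detail the paper leaves implicit---verifying the hypothesis of Hickey's Lemma by noting $E^2=I$ so that $E$ is the identity of the sandwich semigroup---which is a welcome addition.
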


It remains an open question what the structure of this submonoid is. The next two sections will begin to explore the structure.

\subsection{The $[0]$ and $[n]$ classes}
We generalize the identity class by defining the $[n]_j$, $n\in\{0,1,2,...\}$ to have the properties such that if we join it to any graph on the left or right, then it multiplies the determinant of the graph by $n$. That is, for any $N\in [n]_j$ and any $G, H\in\mathbb{G}_j$, $ \phi([G\jjoin N\jjoin H])=n\phi([G\jjoin H])=n\phi([G])E_j\phi([H])$. Hence, $N$ must have the property that $\phi([N])=nE_j$. Which gives us the following necessary and sufficient conditions on the graph $N$:

\begin{theorem}\label{nclass}
Let $j\in\N$,  $n\in \N\cup\{0\}$ and $N\in \mathbb{G}_j$. Then, $N\in [n]_j$ if and only if $N$ satisfies the condition that for any vertex deletion sets $R_1,R_2$ and corresponding allowable handle sets $B_1, B_2$ and their conjugate sets, $$|(N\sm{R_1\cup R_2^*})_{B_1\cup B_2^*}|=
            \begin{cases} 
              n(-1)^{|R_1|+|B_1|} &\text{if } R_1=R_2, B_1=B_2 \\
              0&\text{ otherwise}.
            \end{cases}$$
\end{theorem}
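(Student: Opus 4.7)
The plan is to translate the class membership $N \in [n]_j$ into the matrix equation $\phi([N]) = nE_j$ via the homomorphism of \Cref{main-thm}, and then to read off entries to recover the stated piecewise determinant condition. The only non-trivial ingredient I need beyond the preceding results is the identity $E_j^2 = I_k$, which is immediate because $E_j$ is a diagonal matrix with $\pm 1$ entries.

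First I would observe that by the definition of $\phi$, the $(i,l)$-entry of $\phi([N])$ is exactly $\det\bigl((N \sm R_i \cup R_l^*)_{B_i \cup B_l^*}\bigr)$, while the $(i,l)$-entry of the diagonal matrix $nE_j$ is $n(-1)^{|R_i|+|B_i|}$ when $i = l$ and $0$ otherwise. Hence the piecewise determinant condition in the statement is equivalent, entry-by-entry, to the single matrix equation $\phi([N]) = nE_j$. This reduces the theorem to the equivalence $N \in [n]_j \iff \phi([N]) = nE_j$.

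For the $(\Leftarrow)$ direction, assuming $\phi([N]) = nE_j$, I would apply \Cref{main-thm} twice (using associativity of $\jjoin$) to compute, for any $G,H \in \mathbb{G}_j$,
\begin{align*}
\phi([G \jjoin N \jjoin H])
  &= \phi([G])\, E_j\, \phi([N])\, E_j\, \phi([H]) \\
  &= n\, \phi([G])\, E_j^{\,3}\, \phi([H]) \\
  &= n\, \phi([G])\, E_j\, \phi([H]) = n\, \phi([G \jjoin H]),
\end{align*}
using $E_j^2 = I_k$ (hence $E_j^3 = E_j$) in the third equality. This is precisely the defining property of $[n]_j$, so $N \in [n]_j$.

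For the $(\Rightarrow)$ direction, let $I_{\mathrm{id}}$ be a representative of the identity class supplied by \Cref{id-exist}. Reading off the entries of $\phi([I_{\mathrm{id}}])$ using \Cref{ID} in the same entry-by-entry way shows $\phi([I_{\mathrm{id}}]) = E_j$. Since $N \in [n]_j$ means $\phi([G \jjoin N \jjoin H]) = n\,\phi([G \jjoin H])$ for all $G,H \in \mathbb{G}_j$, I would specialize $G = H = I_{\mathrm{id}}$; because $I_{\mathrm{id}}$ is an identity for $\jjoin$, the left side becomes $\phi([N])$ and the right side becomes $n\,\phi([I_{\mathrm{id}}]) = nE_j$, yielding $\phi([N]) = nE_j$ as required. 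The main (and only mild) obstacle is bookkeeping the conjugate pairs $(R_i,B_i)$ vs.\ $(R_l,B_l)$ when identifying entries of $\phi([N])$ with entries of $nE_j$; the sign factor $(-1)^{|R_i|+|B_i|}$ is precisely the diagonal of $E_j$ as defined in \Cref{main-thm}, so the identification is forced.
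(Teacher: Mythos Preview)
Your proof is correct and follows the same route the paper takes: both reduce the statement to the matrix equation $\phi([N]) = nE_j$ via the homomorphism of \Cref{main-thm}, and then read off entries. The paper in fact gives no formal proof beyond the sentence ``Hence, $N$ must have the property that $\phi([N])=nE_j$'' in the paragraph preceding the theorem, so your argument---which explicitly handles both directions using $E_j^2 = I_k$ for $(\Leftarrow)$ and specialization to the identity representative of \Cref{id-exist} for $(\Rightarrow)$---is a more complete version of the same idea.
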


We observe that this implies that 
\begin{enumerate}
    \item $|N|$=n
    \item For any vertex deletion set $R$ and handle set $B$ and their conjugate sets $R^*, B^*$ where at least one set is nonempty,
    $$|(N\sm R)_B|=|(N\sm R^*)_{B^*}|=0$$
\end{enumerate}

An example of a graph with the properties above is similar to the example of the identity graph given in \Cref{jiden}. We use the fact that the determinant of the complete graph is $ |K_{n+1}| =  (-1)^{n}(n)$. We need to include an extra copy of $P_2$ if $n+j$ is odd so that the resulting determinant is $n$. 
\begin{equation*}
N=
\begin{cases} 
      K_{n+1} \bigsqcup \left(\ds{\coprod_{i=1}^j P_2}\right )  & \text{if }n+j\text{ is even} \\
      K_{n+1} \bigsqcup \left(\ds{\coprod_{i=1}^{j+1} P_2}\right ) & \text{if }n+j\text{ is odd} 
   \end{cases}
\end{equation*}

The fact that these are the only graphs is proven in the same way that the uniqueness of the identity was proven.

\subsection{Group Structure in the Semigroup}
Consider $[G]\in \mathcal{G}j$ such that there exists $[G^{-1}] \in \mathcal{G}_j$ so that $[G\jjoin G^{-1}]=Id_j$. Then $\phi([G\jjoin G^{-1}])=\phi([G])E_j\phi([G^{-1}])=E_j$. If we take the determinant of both sides, this gives us 

$$\det(\phi([G]))\det(\phi([G^{-1}]))=1$$

Which means that all of these graphs map to invertible matrices with determinants $\pm1$. So, if $L$ is the set of all classes of graphs in $\mathcal{G}_j$ that have inverses, that is, the largest group in the semigroup $\mathcal{G}_j$. Then  $\Phi(L)\subset GL \left( {2j\choose j},\Z \right)$.  We know that $GL(n,\Z)$ is finitely generated, so if we can find graphs that map to the generators then we will have a complete representation of this group. 

\section{Applications}
In this section we will show how we can use the homomorphism to easily calculate the $n$-fold join of various graphs. By work in \cite{BP}, the following theorem holds.

\begin{theorem}\label{k-graphs}
Let $m$ and $n$ be positive integers and $K_m$ the complete graph on $m$ vertices. Then

$$
\left|K_m \njoin{j} K_n\right| = \left\{ 
\begin{array}{ll}
(-1)^{m+n}(m+n-3), & \text{for }j=1 \\
0, & \text{for }j \ge 2.
\end{array}
\right.
$$
\end{theorem}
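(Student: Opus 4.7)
The plan is to apply Theorem \ref{sum} and exploit the highly symmetric structure of $K_m$. For $j=1$, Corollary \ref{cor} gives
\[
|K_m\njoin{1}K_n|=|K_m||K_n|-|K_{m-1}||K_{n-1}|,
\]
so substituting $|K_p|=(-1)^{p-1}(p-1)$ (which follows from the eigenvalues of $\mathcal{J}-I$) and factoring out $(-1)^{m+n}$ yields $(-1)^{m+n}\big[(m-1)(n-1)-(m-2)(n-2)\big]=(-1)^{m+n}(m+n-3)$.

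For $j\ge 2$ I would apply Theorem \ref{sum} and evaluate each summand. Setting $r=|R|$, $b=|B|$, and $s=m-r$, the graph $(K_m\setminus R^*)_{B^*}$ is $K_s$ together with $b$ handle vertices, so its adjacency matrix has block form
\[
\begin{pmatrix} \mathcal{J}_s-I_s & X \\ Y & 0 \end{pmatrix},
\]
where $\mathcal{J}_s$ is the $s\times s$ all-ones matrix, $X$ has a single $1$ in each column at the row of the handle tail, and $Y$ has a single $1$ in each row at the column of the handle head. Allowability of $B$ forces the tail set $U$ and head set $V$ to be disjoint, so the explicit inverse $(\mathcal{J}_s-I_s)^{-1}=\tfrac{1}{s-1}\mathcal{J}_s-I_s$ gives $Y(\mathcal{J}_s-I_s)^{-1}X=\tfrac{1}{s-1}\mathcal{J}_b$. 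Since $\mathcal{J}_b$ has rank one, the Schur complement produces
\[
|(K_m\setminus R^*)_{B^*}|=\begin{cases}(-1)^{s-1}(s-1) & \text{if } b=0,\\ (-1)^{s} & \text{if } b=1,\\ 0 & \text{if } b\ge 2,\end{cases}
\]
and the analogous formula holds for $|(K_n\setminus R)_B|$.

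Plugging back in, and noting that there are $\binom{j}{r}$ subsets $R$ of $J$ with $|R|=r$ and $(j-r)(j-r-1)$ allowable single-handle sets on $J\setminus R$, the join determinant collapses to
\[
(-1)^{m+n}\!\left[\sum_{r=0}^{j}\binom{j}{r}(-1)^{r}(m-r-1)(n-r-1)-\sum_{r=0}^{j}\binom{j}{r}(-1)^{r}(j-r)(j-r-1)\right].
\]
Each bracketed sum is, up to sign, the $j$-th forward difference at $0$ of a quadratic polynomial in $r$, and therefore vanishes for $j\ge 3$. For $j=2$ a direct evaluation shows both sums equal $2$, so their difference is $0$. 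This establishes $|K_m\njoin{j}K_n|=0$ for every $j\ge 2$.

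The main obstacle is proving the handle-determinant formula with the correct sign. A cofactor-expansion through each handle vertex works but the sign bookkeeping is tedious; the Schur complement paired with the closed-form inverse of $\mathcal{J}_s-I_s$ sidesteps this by reducing everything to a rank computation, where the rank-one matrix $\mathcal{J}_b$ forces all $b\ge 2$ contributions to vanish automatically.
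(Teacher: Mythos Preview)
The paper does not actually prove this theorem; it simply records it as a consequence of \cite{BP} (``By work in \cite{BP}, the following theorem holds'') and moves on. So there is no in-paper argument to compare yours against.

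Your proposal is a correct, self-contained proof. The $j=1$ case is the standard two-term computation from Corollary~\ref{cor}. For $j\ge 2$ your key observation is that, because every vertex-deleted complete graph is again complete and because allowability forces all handle endpoints to be pairwise distinct, the Schur complement $-Y(\mathcal{J}_s-I_s)^{-1}X$ is the rank-one matrix $-\tfrac{1}{s-1}\mathcal{J}_b$; this kills all $b\ge 2$ terms at once and pins down the signs for $b\in\{0,1\}$ without any cofactor bookkeeping. The remaining double sum then reduces to $j$-th finite differences of quadratics, which vanish for $j\ge 3$, and a two-line check for $j=2$. This is clean and arguably more conceptual than the inductive/expansion arguments in \cite{BP}; the Schur step in particular explains \emph{why} higher handle counts contribute nothing rather than verifying it term by term.

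Two small points worth noting explicitly in a write-up: (i) the Schur-complement formula needs $\mathcal{J}_s-I_s$ invertible, i.e.\ $s\ge 2$, which is guaranteed once $m,n\ge 2j$ (the standing hypothesis for $\mathbb{G}_j$), so you should state that assumption; (ii) the count $(j-r)(j-r-1)$ of single-handle allowable sets is simply the number of ordered pairs of distinct vertices in $J\setminus R$, since the comparability condition in the definition of ``allowable'' is vacuous for $|B|=1$---worth a half-sentence so the reader doesn't pause.
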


We will denote the the \B{$n$-fold $j$-join} of a graph $G$ by $ \nfoldjoin{n}{j} G$. That is,
$$ 
\nfoldjoin{n}{j} G= \underbrace{G \jjoin G \jjoin \cdots \jjoin G}_{n \text{-joins}}.
$$
For example, the $0$-fold $j$-join of a graph $G$ is just the graph itself, the $1$-fold $j$-join of $G$ is $G \njoin{j} G$, and the $2$-fold $j$-join is $G \njoin{j} G \njoin{j} G$.
\subsection{1-join Example}

\begin{theorem}\label{j-join-complete}
For integers $n,m$ where $n \geq0$ and $m\geq3$, 

$$
\phi\left(\nfoldjoin{n}{1} K_m\right) = (-1)^{mn+m+n}
\begin{bmatrix} -\left[ (m-2)n+(m-1) \right] & (n+1)(m-2)\\
(n+1)(m-2) & -[(m-2)n+(m-3)] 
\end{bmatrix}
$$

\end{theorem}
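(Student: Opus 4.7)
The plan is to prove this by induction on $n$, using the homomorphism property from Theorem \ref{main-thm}, namely $\phi(\nfoldjoin{n+1}{1} K_m) = \phi(\nfoldjoin{n}{1} K_m) \, E_1 \, \phi(K_m)$. The base case $n=0$ should be handled by direct computation. For $j=1$, the allowable pairs $(R,B)$ are only $(\emptyset,\emptyset)$ and $(\{1\},\emptyset)$, since a single vertex admits no handles. Enumerating them in this order, one finds immediately that
$$
E_1 = \begin{pmatrix} 1 & 0 \\ 0 & -1 \end{pmatrix},
$$
and the four entries of $\phi(K_m)$ are $\det(K_m)$, $\det(K_{m-1})$, $\det(K_{m-1})$, $\det(K_{m-2})$. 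Substituting the well-known identity $\det(K_r) = (-1)^{r-1}(r-1)$ and factoring out $(-1)^m$ gives the $n=0$ case of the claimed formula.

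For the inductive step, I would first strip off the scalar by writing the conjectured answer as $A_n = (-1)^{mn+m+n} M_n$, where
$$
M_n = \begin{pmatrix} -[(m-2)n+(m-1)] & (n+1)(m-2) \\ (n+1)(m-2) & -[(m-2)n+(m-3)] \end{pmatrix}.
$$
Since the sign ratio $(-1)^{m(n+1)+m+(n+1)} / (-1)^{mn+m+n} = (-1)^{m+1}$, the homomorphism identity $A_{n+1} = A_n E_1 A_0$ reduces, after cancelling the common factor $(-1)^{mn+m+n}(-1)^m$, to the purely algebraic identity
$$
M_{n+1} = -\, M_n \, E_1 \, M_0.
$$
This is the key identity to verify. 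I would compute $M_n E_1$ (which just negates the second column of $M_n$) and then multiply by $M_0$ entry-by-entry. Each of the four entries collapses after a short calculation: for instance, the $(1,1)$ entry of $M_n E_1 M_0$ simplifies via $(m-1)(m-2)n - (m-2)^2 n = (m-2)n$ in the $n$-linear part and $(m-1)^2 - (m-2)^2 = 2m-3$ in the constant part, matching $-[(m-2)(n+1)+(m-1)]$. The $(1,2)$ entry factors a clean $(m-2)$ out, and the remaining bracket collapses to $-(n+2)$; the $(2,1)$ entry is identical by the symmetry $M_n = M_n^T$; the $(2,2)$ entry is handled by the same telescoping.

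The main obstacle is bookkeeping rather than insight: one must be careful with the sign $(-1)^{mn+m+n}$, with the fact that applying $r_i$ and $c_l$ to $K_m$ yields smaller complete graphs (here the symmetry of $K_m$ is crucial, so $\det(K_m \setminus \{1\}) = \det(K_m \setminus \{-1\})$), and with keeping the correct enumeration of $(R,B)$ pairs so that $E_1$ has the right diagonal. Once the base case and the identity $M_{n+1} = -M_n E_1 M_0$ are verified, the induction closes and the formula follows.
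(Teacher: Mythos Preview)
Your proposal is correct and follows essentially the same route as the paper: induction on $n$, with the base case computed directly from $\det(K_r)=(-1)^{r-1}(r-1)$ and the inductive step obtained from the homomorphism identity $\phi(\nfoldjoin{n+1}{1} K_m)=\phi(\nfoldjoin{n}{1} K_m)\,E_1\,\phi(K_m)$ with $E_1=\begin{pmatrix}1&0\\0&-1\end{pmatrix}$. Your extra step of stripping the scalar to reduce to $M_{n+1}=-M_nE_1M_0$ is just a clean way to organize the ``straightforward calculation'' the paper leaves to the reader, and your entrywise checks are correct.
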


Note that this will immediately imply the following result by taking
the $(1,1)$ element of the matrix given by $\phi\left( \nfoldjoin{n}{1} K_m\right)$.

\begin{corollary}
For integers $n,m$ where $n \geq0$ and $m\geq3$, the determinant of the $n-$fold join of the the complete graph on $m$ vertices, with any labeling, is given by 
$$
\left| \nfoldjoin{n}{1} K_m \right| = (-1)^{(m+1)n}\left[ (m-2)n+m-1 \right].
$$
\end{corollary}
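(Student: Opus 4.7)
The plan is to prove Theorem \ref{j-join-complete} by induction on $n$, using the homomorphism property established in Theorem \ref{main-thm}. For $j=1$ we have $k=\binom{2}{1}=2$, so every matrix $\phi([G])$ is $2\times 2$ and the sandwich element is $E_1 = \left[\begin{smallmatrix}1 & 0\\ 0 & -1\end{smallmatrix}\right]$. Writing $A_n := \phi\left(\nfoldjoin{n}{1} K_m\right)$, the key recurrence is $A_{n+1} = A_n\, E_1\, A_0$, which comes from applying Theorem \ref{main-thm} to the identity $\nfoldjoin{n+1}{1} K_m = \left(\nfoldjoin{n}{1} K_m\right) \jjoin K_m$.

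For the base case $n=0$, the plan is to compute the four entries of $\phi([K_m])$ directly from the definition: the $(1,1)$ entry is $|K_m|$, the $(1,2)$ entry is $|K_m\setminus\{-1\}| = |K_{m-1}|$, the $(2,1)$ entry is $|K_m\setminus\{1\}| = |K_{m-1}|$, and the $(2,2)$ entry is $|K_m\setminus\{1,-1\}| = |K_{m-2}|$. There are no nontrivial handle terms when $j=1$ because any handle would need two vertices in $J\setminus R$, and $|J|=1$. Using $|K_\ell| = (-1)^{\ell-1}(\ell-1)$, I would factor out $(-1)^m$ and match against the $n=0$ specialization of the claimed formula, which gives
$$\phi([K_m]) = (-1)^m \begin{bmatrix} -(m-1) & m-2\\ m-2 & -(m-3)\end{bmatrix}.$$

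For the inductive step, assume the formula holds for $n$ and write $A_n = (-1)^{mn+m+n} B_n$ where $B_n$ is the matrix in the statement. Computing the sign ratio shows that the recurrence $A_{n+1} = A_n E_1 A_0$ reduces to the matrix identity $B_{n+1} = -B_n E_1 B_0$. The plan is then to carry out this $2\times 2$ multiplication entry by entry; each entry reduces to a difference of squares of the form $(m-1)^2 - (m-2)^2$ or $(m-2)^2 - (m-3)^2$ after factoring out $(m-2)$, and collapses cleanly to the entries of $B_{n+1}$, for instance $(m-2)n + (2m-3)$ in the $(1,1)$ position, which matches $(m-2)(n+1) + (m-1)$.

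The main obstacle is purely bookkeeping: tracking the exponent of $-1$ through the factors $c_n = (-1)^{mn+m+n}$, $c_0 = (-1)^m$, and the sandwich element $E_1$, and then not losing a sign when writing the reduced recurrence $B_{n+1} = -B_n E_1 B_0$. There is no conceptual difficulty beyond this; the symmetry of $K_m$ ensures all single-vertex deletions give the same subgraph, which is why the matrix $\phi([K_m])$ has the clean form above and why the induction closes in a single line of matrix algebra. The stated corollary then follows by reading off the $(1,1)$ entry of $\phi\left(\nfoldjoin{n}{1} K_m\right)$.
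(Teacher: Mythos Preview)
Your proposal is correct and follows essentially the same approach as the paper: prove Theorem~\ref{j-join-complete} by induction on $n$, computing $\phi([K_m])$ directly for the base case and using the recurrence $A_{n+1}=A_n E_1 A_0$ from Theorem~\ref{main-thm} for the inductive step, then read off the $(1,1)$ entry to obtain the corollary. Your write-up is in fact slightly more explicit than the paper's, since you spell out the sign-tracking reduction $B_{n+1}=-B_nE_1B_0$ and indicate how the entrywise computation collapses, whereas the paper simply says ``a straightforward calculation gives the result.''
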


\begin{proof}[Proof of \Cref{j-join-complete}]
The proof is by induction on $n$. 
For $n=0$ we have, 

\begin{align*}
    \phi\left( \nfoldjoin{0}{1} K_m\right) = \phi\left( K_m \right) &=
    \begin{bmatrix}
    |K_m| & |K_m \sm \{m\}| \\
    |K_m \sm \{1\} & |K_m \sm \{1,m\}
    \end{bmatrix}\\
    &= (-1)^{m-1}
    \begin{bmatrix}
    m-1 & -(m-2) \\
    -(m-2) & m-3
    \end{bmatrix}.
\end{align*}

Assuming the result for $n$, we now examine $n+1$, noting that for the 1-join, $E_1$ from \Cref{main-thm} is the matrix $\begin{bmatrix} 1 & 0 \\ 0 & -1 \end{bmatrix}$. Then 
$$\phi \left( \nfoldjoin{n+1}{1} K_m\right) = \phi \left( \left(\nfoldjoin{n}{1} K_m\right) \njoin{1} K_m \right) = \phi \left( \left(\nfoldjoin{n}{1} K_m\right) \right)\cdot  E_1 \cdot \phi(K_m)$$ 
can be expressed as

\fontsize{9}{9}\selectfont
$
\begin{bmatrix} -\left[ (m-2)n+(m-1) \right] & (n+1)(m-2)\\
(n+1)(m-2) & -[(m-2)n+(m-3)] 
\end{bmatrix}
\cdot
\begin{bmatrix}
1 & 0\\
0 & -1
\end{bmatrix}
\cdot 
\begin{bmatrix}
m-1 & -(m-2) \\
-(m-2) & m-3
\end{bmatrix}
$
\fontsize{11}{11}\selectfont

scaled by $(-1)^{mn+n+1}$. A straightforward calculation gives the result.
\end{proof}

\subsection{2-join Example}

\begin{theorem}
For integers $n,m$ where $n \geq1$ and $m\geq3$, 
$$
\left| \nfoldjoin{n}{2} K_m \right| = 0.
$$
\end{theorem}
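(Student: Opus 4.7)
The plan is to prove the strengthening $\phi(\nfoldjoin{n}{2} K_m) = 0$ (as a $6 \times 6$ matrix) for every $n \geq 1$; the claimed identity $|\nfoldjoin{n}{2} K_m|=0$ is then just the $(1,1)$ entry. Theorem~\ref{main-thm} gives
\[
\phi(\nfoldjoin{n}{2} K_m) \;=\; \phi(\nfoldjoin{n-1}{2} K_m)\,E_2\,\phi(K_m),
\]
so a one-line induction on $n$ reduces the theorem to the base case $\phi(K_m)\,E_2\,\phi(K_m)=0$.

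For the base case, I would write $\phi(K_m)$ as an explicit $6 \times 6$ matrix by enumerating the $\binom{4}{2}=6$ allowable $(R,B)$ pairs as $(\emptyset,\emptyset)$, $(\{1\},\emptyset)$, $(\{2\},\emptyset)$, $(\{1,2\},\emptyset)$, $(\emptyset,\{[1,2]\})$, $(\emptyset,\{[2,1]\})$. The sixteen top-left entries involve only vertex deletions and reduce to $|K_{m-k}|=(-1)^{m-k-1}(m-k-1)$. The sixteen single-handle entries follow from the block-determinant identity $\det\bigl(\begin{smallmatrix}A & e_u \\ e_v^T & 0\end{smallmatrix}\bigr)=-\det(A)(A^{-1})_{v,u}$ combined with the closed form $A_{K_m}^{-1}=\tfrac{1}{m-1}J-I$, giving $|(K_m)_{[u,v]}|=(-1)^m$ for $u \neq v$. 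For the four entries of the bottom-right $2\times 2$ block (handles attached on both sides), the analogous Schur-complement reduction produces a $2\times 2$ matrix whose four entries are all equal to $\tfrac{1}{m-1}$ and so has determinant $0$; thus this entire block of $\phi(K_m)$ vanishes.

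Inspecting the resulting matrix, one sees it has rank exactly two: rows $2,3$ agree and rows $5,6$ agree (since $1\leftrightarrow 2$ is an automorphism of the ``first two'' vertices), and a straightforward check gives $\text{row }4=-\text{row }1-2(\text{row }2)$ and $\text{row }5=-\text{row }1-(\text{row }2)$. Together with the symmetry of $\phi(K_m)$ coming from the involution swapping first and last pairs of vertices of $K_m$, this yields a factorization
\[
\phi(K_m)\;=\;U\,S\,U^T,
\]
with $U \in M(6\times 2,\Z)$ an explicit integer matrix \emph{independent of $m$} encoding the row dependencies, and $S$ the $2\times 2$ top-left block of $\phi(K_m)$. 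The heart of the argument is then the purely integer-arithmetic $2\times 2$ check
\[
U^T E_2 U \;=\; 0, \qquad E_2 \;=\; \mathrm{diag}(1,-1,-1,1,-1,-1),
\]
after which $\phi(K_m)\,E_2\,\phi(K_m) \,=\, U S (U^T E_2 U) S U^T \,=\, 0$ is automatic.

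The main obstacle I expect is the careful bookkeeping in the bottom-right $2\times 2$ block of $\phi(K_m)$: correctly setting up the Schur complement for $K_m$ with two attached handles, and verifying that all four determinants vanish for the possible combinations of handle directions. After that, the cancellation $U^T E_2 U = 0$ is an $m$-independent check in a fixed $2\times 2$ integer matrix, and the remainder of the proof is formal.
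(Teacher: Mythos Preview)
Your approach is essentially the paper's: both compute $\phi(K_m)$ explicitly, verify $\phi(K_m)\,E_2\,\phi(K_m)=0$, and conclude for all $n\geq 1$ by induction via Theorem~\ref{main-thm}. Your rank-two factorization $\phi(K_m)=U S U^T$ with the $m$-independent check $U^T E_2 U=0$ is a clean structural explanation of what the paper establishes simply by ``direct calculation.''
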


\begin{proof}
Note that $\phi(K_m)$ and $E_2$ are, respectively the matrices

$$(-1)^m\begin{bmatrix}
 - (-1+m) &  (-2+m) &  (-2+m) &- (-3+m) & 1 & 1 \\
  (-2+m) & -(-3+m) & - (-3+m) &  (-4+m) & -1 & -1 \\
  (-2+m) & - (-3+m) &-(-3+m) &  (-4+m) & -1 & -1 \\
 -(-3+m) &  (-4+m) &  (-4+m) & -(-5+m) & 1 & 1 \\
 1 & -1 & -1 & 1 & 0 & 0 \\
 1 & -1 & -1 & 1 & 0 & 0 \\
\end{bmatrix},$$ and

$
E_2=\begin{bmatrix}
 1 & 0 & 0 & 0 & 0 & 0 \\
 0 & -1 & 0 & 0 & 0 & 0 \\
 0 & 0 & -1 & 0 & 0 & 0 \\
 0 & 0 & 0 & 1 & 0 & 0 \\
 0 & 0 & 0 & 0 & -1 & 0 \\
 0 & 0 & 0 & 0 & 0 & -1 \\
\end{bmatrix}
$

Then by direct calculation we see that
$$\phi\left( K_m \njoin{2} K_m \right) = \phi(K_m) \cdot E_2 \cdot \phi(K_m)$$ is the zero matrix, which implies the result.
\end{proof}

We note that the above theorem has significance despite the previously known result that $|K_m \njoin{j} K_m| = 0$. It can be the case that $|G|=0$ with $|G \njoin{j} H| \ne 0$, as can be seen by taking $G = P_3, H=P_5,$ with the canonical labeling and $j = 1$.
\subsection{Chain of path graphs with different labeling}

As we have discussed in this paper, the labeling of the graph contains the information on where to join the graph. 
\begin{example} \label{chainofp4s}
Let $\widetilde{P}_4$ be a path graph on 4 vertices where we label them starting at one end as $1,3,2,4$. 
\begin{figure}[H]
 \begin{center}
        \begin{tikzpicture}[scale=1]
            \tikzstyle{every node}=[circle,fill=black,inner sep=0.5mm,yshift=0,xshift=0]
            \node  [label={[xshift=0cm ]left:4}] (4+0) at  (0,4)  {};
            \node [label={[xshift=0cm ]left:2}](2+0) at (0,3) {};
            \node [label={[xshift=0cm ]left:3}](3+0) at (0,2) {};
            \node [label={[xshift=0cm ]left:1}] (1+0) at (0,1) {};
            \draw (1+0)--(4+0);
        \foreach \x in {-3,3}{
            \node  [label={[xshift=0cm ]left:1}] (1+\x) at  (\x,4)  {};
            \node [label={[xshift=0cm ]left:3}](3+\x) at (\x,3) {};
            \node [label={[xshift=0cm ]left:2}](2+\x) at (\x,2) {};
            \node [label={[xshift=0cm ]left:4}] (4+\x) at (\x,1) {};
            \draw (1+\x)--(4+\x);
        }
        \draw (4+-3)--(1+0);
        \draw (3+-3)--(2+0);
        \draw (3+0)--(2+3);
        \draw (4+0)--(1+3);
        \draw[dashed](3,3)--(5,3); 
        \draw[dashed](3,1)--(5,1); 
        \end{tikzpicture}
    \end{center}
\caption{Chain of $\widetilde{P}_4$}
    
\end{figure}
\end{example}

\begin{theorem}\label{2-join-path}
For integer $n \geq0$, 

$$
\left|\nfoldjoin{n}{2} \widetilde{P}_4\right| = 1
$$
\end{theorem}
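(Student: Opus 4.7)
The plan is to use the matrix homomorphism $\phi$ from \Cref{main-thm}: set $M := \phi(\widetilde{P}_4) \in M(6,\Z)$, so that $\phi(\nfoldjoin{n}{2}\widetilde{P}_4) = M\,(E_2 M)^n$ and the theorem reduces to showing the $(1,1)$-entry of this product equals $1$ for every $n \geq 0$. My target is the stronger identity
\[
M\cdot E_2 \cdot M = M,
\]
i.e., that $[\widetilde{P}_4]$ is idempotent in the monoid $(\mathcal{G}_2, \njoin{2})$. An immediate induction then gives $\phi(\nfoldjoin{n}{2}\widetilde{P}_4) = M$ for all $n\geq 0$, and its $(1,1)$-entry is $|\widetilde{P}_4| = |P_4| = 1$.

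The first step is the explicit computation of $M$. I would use the same indexing of the six $(R_i,B_i)$ pairs as in the displayed matrix $E_2$ appearing in the $2$-join computation for $K_m$, namely $(\emptyset,\emptyset),(\{1\},\emptyset),(\{2\},\emptyset),(\{1,2\},\emptyset),(\emptyset,\{[1,2]\}),(\emptyset,\{[2,1]\})$. A useful observation before brute computation: the labeling $1,3,2,4$ along the path was chosen so that the involution $\sigma: k\leftrightarrow -k$ (i.e., $1\leftrightarrow 4$ and $2\leftrightarrow 3$) is a graph automorphism of $\widetilde{P}_4$. Combining $\sigma$ with transposition of the adjacency matrix --- both of which preserve determinants, while transposition reverses the direction of every handle --- yields $|(\widetilde{P}_4\sm R)_B| = |(\widetilde{P}_4 \sm R^*)_{B^*}|$ for every admissible $R,B$, which forces $M_{il} = M_{li}$. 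So $M$ is symmetric and only $21$ of the $36$ entries must be evaluated. Many vanish for elementary reasons: vertex deletions that isolate a vertex of $\widetilde{P}_4$ or leave a copy of $P_3$, and the handle $[1,2]$ producing two equal rows (the new handle vertex and vertex $4$).

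With $M$ in hand, the identity $M E_2 M = M$ is a direct $6\times 6$ matrix multiplication check. As a sanity check, its $(1,1)$-entry is $|\widetilde{P}_4 \njoin{2} \widetilde{P}_4|$, and the $j=2$ expansion in the unlabeled corollary after \Cref{cor} (applied to $G=H=\widetilde{P}_4$) collapses to $|\widetilde{P}_4|^2 = 1$, since every other summand contains a modified-graph factor equal to $0$. The main obstacle is the careful bookkeeping of the $21$ determinants on up to $6$ vertices with mixed undirected path edges and directed handle edges, together with the correct use of the direction-reversing conjugation $B^* = \{[-r,-c] : [c,r]\in B\}$ when constructing each modified graph. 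Should the idempotency $M E_2 M = M$ unexpectedly fail, the fallback is to track only the first row of $\phi(\nfoldjoin{n}{2}\widetilde{P}_4)$ by induction; but the strong $\sigma$-symmetry of $\widetilde{P}_4$ makes idempotency the natural target.
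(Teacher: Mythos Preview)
Your main target, the idempotency $M E_2 M = M$, is false. With the paper's explicit matrix
\[
M=\phi(\widetilde{P}_4)=
\begin{bmatrix}
 1 & 0 & 0 & 0 & 0 & 0 \\
 0 & -1 & -1 & 0 & -1 & -1 \\
 0 & -1 & 0 & 0 & 0 & 0 \\
 0 & 0 & 0 & 1 & 0 & 0 \\
 0 & -1 & 0 & 0 & 0 & -1 \\
 0 & -1 & 0 & 0 & -1 & 0
\end{bmatrix},
\qquad E_2=\mathrm{diag}(1,-1,-1,1,-1,-1),
\]
the second row of $ME_2$ is $(0,1,1,0,1,1)$ and the second column of $M$ is $(0,-1,-1,0,-1,-1)^T$, so $(ME_2M)_{2,2}=-4\neq -1=M_{2,2}$. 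Thus $[\widetilde{P}_4]$ is \emph{not} idempotent in $(\mathcal{G}_2,\njoin{2})$, and your sanity check on the $(1,1)$-entry is consistent with this failure precisely because that entry is the only one guaranteed to match. The symmetry argument via the automorphism $\sigma$ is correct (and $M$ is indeed symmetric), but it does not force idempotency.

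Your stated fallback is exactly what the paper does, and it is much shorter than a full computation of $M$ followed by a $6\times 6$ multiplication. The paper simply observes that the first row and first column of $M$ are $e_1^T$ and $e_1$ (equivalently: $|\widetilde{P}_4|=1$ while every nontrivial one-sided modification $|(\widetilde{P}_4\setminus R)_B|$ or $|(\widetilde{P}_4\setminus R^*)_{B^*}|$ vanishes), and the same holds for $E_2$. Hence any finite product built from $M$ and $E_2$ again has first row $e_1^T$, and in particular $(M(E_2M)^n)_{1,1}=1$ for all $n\ge 0$. So drop the idempotency claim and go straight to this first-row argument; the $21$-determinant bookkeeping you outline is unnecessary, since only the five entries $M_{1,2},\dots,M_{1,6}$ (all zero by the easy vanishing observations you already listed) and $M_{1,1}=|P_4|=1$ are needed.
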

\begin{proof}
    Using Harary, we calculate
$$\phi(\widetilde{P}_4)=
\begin{bmatrix}
 1 & 0 & 0 & 0 & 0 & 0 \\
 0 & -1 & -1 & 0 & -1 & -1 \\
 0 & -1 & 0 & 0 & 0 & 0 \\
 0 & 0 & 0 & 1 & 0 & 0 \\
 0 & -1 & 0 & 0 & 0 & -1 \\
 0 & -1 & 0 & 0 & -1 & 0 \\
\end{bmatrix}
$$
Then, since the first row and column of both $\phi(\widetilde{P}_4)$ and $E_2$ is a 1 at $(1,1)$ element and zero elsewhere then the $(1,1)$ element of the product will always be 1.
\end{proof}

\section{Future Work}
Much work remains to be done in this area. The structure of the semigroup is unknown as it depends on the surjectivity of $\phi$. That is, will any integer matrix be the image of a graph under $\phi$? A possible first step is to explore the generators and relators of the relevant submonoids.

 As noted in \cite{SB}, in 1957 Collatz and Sinogowitz in \cite{CS} posed the problem of characterizing graphs with positive nullity, and that an adjacency matrix with zero determinant ensures a graph has positive nullity. Our work demonstrates that joining any collection of graphs where one of them is the $[0]$ class ensures that the joined graphs have a determinant of zero. This gives a whole class of graphs that have positive nullity.

Parallel work can be done using a different matrix other than the adjacency such as the Laplacian, Hermitian, etc. The main obstacle here is to recreate a similar sum decomposition for the determinant of join of graphs as done in \cite{BP}. The authors have preliminary work done in this direction for the Laplacian matrix.

From the point of view of calculations, the techniques shown above can be applied to find generating functions for the determinant of any chain of graphs joined in various ways as done in \Cref{chainofp4s}.

\typeout{}

\thanks{Department of Mathematics and Statistics, Georgia Gwinnett College\\dpinzon@ggc.edu, dpragel@ggc.edu, jroberts@ggc.edu}

\end{document}